\newlist{myEnumerate}{enumerate}{5}
\setlist[myEnumerate,1]{label=(\arabic*)}
\setlist[myEnumerate,2]{label=(\Roman*)}
\definecolor{refgreen}{rgb}{0,0.5,0}
\pgfplotsset{compat=newest}
\pgfplotsset{plot coordinates/math parser=false}
\newlength\figureheight
\newlength\figurewidth 
\def   \d {\hspace{1.5pt}\mathrm{d}}
\definecolor{refblue}{rgb}{0,0,0.75}
\definecolor{refblueb}{rgb}{0,0,1}
\definecolor{refgreen}{rgb}{0.13,0.55,0.13}
\definecolor{refred}{rgb}{1,0,0}
\newcommand{\R}{\mathbb{R}}
\newcommand{\IntRef}{\mathcal{I}_{\text{ref}}^n} 
\DeclareMathOperator{\osc}{\textnormal{osc}}
\DeclareMathOperator{\TOL}{\texttt{TOL}}
\DeclareMathOperator{\ProjOneSided}{Pr_h}
\newcommand{\Rcoarse}{\mathcal{R}_{\textnormal{c}}}
\newcommand{\Ga}{\varGamma}
\newcommand{\baruhtau}{\overline{\vphantom{\bar u}u_{h,\tau}}}
\newcommand{\comesh}{n-1 \oplus n} 
\newcommand{\ebk}{\color{black}}
\crefname{hypothesis}{Hypothesis}{Hypotheses}
\title{A posteriori error estimates for parabolic partial differential equations on stationary surfaces}
\author{Bal\'azs Kov\'acs$^\dagger$ \and
Michael Lantelme\thanks{Institute of Mathematics, Paderborn University, Paderborn} 
		(\email{lantelme@math.uni-paderborn.de})}
\begin{document}

\maketitle

\begin{abstract}
	This paper develops and discusses a residual-based a posteriori error estimator for parabolic surface partial differential equations on closed stationary surfaces. The full discretization uses the surface finite element method in space and the backward Euler method in time. The proposed error indicator bounds the error quantities globally in space from above and below, and globally in time from above and locally from below. Based on the derived error indicator, a space--time adaptive algorithm is proposed. Numerical experiments illustrate and complement the theory.
\end{abstract}

\begin{keywords}
	surface PDEs, surface finite elements, a posteriori error analysis for parabolic surface PDEs, residual-based error estimates, space--time adaptive algorithm
\end{keywords}

\begin{MSCcodes}
	65M50, 
	35R01, 
	58J35. 
\end{MSCcodes}

\section{Introduction}
In this paper, we develop and analyse an a posteriori error estimator for parabolic partial differential equations (PDEs) on closed stationary surfaces,  and numerically test a classical space--time adaptive algorithm using newest-vertex bisection, which is based on the proposed error estimators.  The method uses surface finite elements (surface FEM) in space and the implicit Euler method in time. A residual-based error indicator is derived. We prove that the error indicator bounds the error globally in space from above and below, and globally in time from above and locally from below,  i.e.,~the a posteriori error estimator is shown to be efficient and reliable.

The proposed a posteriori error indicator consists of four parts: spatial, temporal, coarsening, and geometric terms. The first two are similar to their counterparts from the Euclidean case (with some additional geometric approximations). The geometric indicator is specific for surface PDEs (see, e.g.,~\cite{AdaptiveFEMBeltrami,APostElliptic}), while the coarsening indicator is essential for time-dependent problems, and particularly challenging for parabolic surface PDEs. 
The presented a posteriori error analysis is a careful generalisation of the classical residual-based error analysis, while it constantly deals with the geometric approximations due to the non-conforming surface finite element discretization (i.e.,~$V_h \nsubseteq V$), and also fully incorporates the coarsening and geometric indicators. 
The analysis and computability of the coarsening indicator requires novel tools and careful techniques: It is essential to use smallest common refinements -- and corresponding interpolation operators, which help to compare functions on subsequent meshes (which may be refined and coarsened yielding distinct discrete surfaces). Although common refinements were used for PDEs in Euclidean domains, see \cite{AdaptiveAlgHeatEq}, the geometry again needs to be handled carefully, this is achieved via the $\theta$-argument of \cite{KLLP_2017} comparing different surface quantities.
Since subsequent meshes usually differ, we use a backward Euler full discretization based on the surface mesh at the current time-step. This requires us to define two linear time interpolations of the numerical solution: one being continuous in time, used in comparing the residual with the error; the other being discontinuous in time, but easy to compute for the error indicators. 
 
We present an adaptive algorithm based on the error indicators which controls both the temporal step size and the refinement/coarsening of spatial meshes.

 Time dependent  surface partial differential equations have various applications in numerous fields, including fluid dynamics \cite{Heikes1995,Gross2007}, ice formation \cite{Myers2004}, brain imaging \cite{Memoli2004}, spectral geometry \cite{Reuter2009}, tumour modelling \cite{Eyles2019ATM,King2021FreeBP},  phase-separation \cite{ElliottRanner,ElliottSales2024_CH}, pattern formation \cite{Amago}, flows on surfaces \cite{BKNV,PorrmannVoigt,ElliottSales2024_NavierStokes_CH},  and mean curvature flow \cite{Huisken1987TheVP,MCF,MCF_surgery}. 

In 1988, Dziuk \cite{Dziuk1988} laid the groundwork for surface finite elements.
For surveys on parabolic surface problems and surface finite elements we refer to, e.g.,~\cite{ESFEM2007,SFEM_for_Para,Dziuk2013FiniteEM}.

We now give a literature overview on \textit{adaptivity for elliptic equations on surfaces}:

The earliest methods were bound to surfaces with a global parametrization \cite{Apel2005}. Demlow and Dziuk \cite{AdaptiveFEMBeltrami} introduced the first adaptive \textit{finite element method} on general surfaces for the Laplace–Beltrami equation, based on an a posteriori error analysis. A key insight is that the error can be split into two parts, a "residual part" from the PDE and a "geometric part" due to the surface approximation. Careful control over the geometry of the surface is needed when aiming for a robust approximation, even though the geometric quantities (except the data approximation) are non-dominant. Later on Camacho and Demlow \cite{APostElliptic} determined an efficient and reliable $L^2$ and point-wise error estimate for elliptic surface PDEs.
 To deal with geometric driven error indicators Bonito et al.~\cite{Bonito2013AFEMFG} introduced an additional adaptive routine to guarantee the error estimates, and to further guarantee shape-regularity throughout refinements, which is non-trivial for surface PDEs \cite{AdaptiveFEMBeltrami}.  

A posteriori error analysis for elliptic surface PDEs was extended to \textit{finite volume methods} by Ju, Tian and Wang \cite{Ju2009} and Demlow and Olshanskii \cite{Demlow2012}; and to \textit{discontinuous Galerkin methods} by Dedner and Madhavan \cite{Dedner2014}.

 We highlight some references on adaptive methods for PDEs in \textit{Euclidean domains}: Early work \cite{Dupont1982MeshMF,Bieterman1982TheFE} developed adaptive strategies based on a posteriori error estimates. We follow the framework introduced by Verfürth \cite{Verfuehrt1996,Verfuerth2003} which provides systematic proofs of reliability and efficiency. For stronger norms, most notably to obtain optimal $L^2 (L^\infty)$ bounds, we refer to, e.g., \cite{eriksson95} using strong stability estimates, \cite{MakridakisNochetto2003,ell_reconstruct} using elliptic reconstruction. 
Adaptive algorithms were analysed, e.g., in \cite{chen2004feng_adap_flat,AdaptiveAlgHeatEq}, based on error indicators derived in the Verfürth framework, establish convergence for parabolic problems and employ the fundamental solve–estimate–mark–refine strategy of Dörfler \cite{Doerfler1996}. 

To our knowledge, a posteriori error analysis and adaptivity was not yet studied for \textit{parabolic problems on surfaces} in the literature.  Moreover, the approach of this work may serve as a blueprint for the a posteriori error analysis for other non-conforming discretisation methods.  

Our results expand on the theory of  a posteriori error estimates  for PDEs on stationary surfaces. Therefore, making a significant step towards adaptive algorithms for evolving surface PDEs and geometric surface flows.  The numerical simulation of the above mentioned problems, e.g.,~singularity resolution for mean curvature flow, would greatly benefit from an adaptive solution approach.  

The paper is organised as follows: In Section~\ref{ch:surfacePDE} the heat equation on stationary surfaces is introduced.  In Section~\ref{ch_full_discr} we briefly discuss shape regularity for adaptive surface FEM. Afterwards we show the spatial and temporal discretization where for the latter we introduce a refinement interpolant which is needed for computation of functions on adaptive time-dependent meshes.  By introducing the refinement interpolant two choices of time-interpolation are available and discussed. We further introduce the smallest common triangulation of two consecutive meshes, and an interpolation in the corresponding finite element space.  
Then in Section~\ref{section:ReliablePara} we state the key results and derive the error indicator. 
Section~\ref{section:proofs} contains most the proofs: the equivalence to the error and residual, residual decomposition, some geometric estimates, and the main estimates for the error indicators. 
In Section~\ref{adaptivity}  we introduce an adaptive algorithm based on  the derived indicator. 
Section~\ref{ch:impl} discusses the implementation of the indicators and an adaptive algorithm. 
Finally, in Section~\ref{ch:experiments} a set of illustrative numerical experiments are given, which show the asymptotic behaviour of the error and reasonable refining and coarsening. 

The implementation of the described algorithm is  based on the $\ell$FEM Matlab package \cite{ellFEM}, and is  accessible on \texttt{\url{https://git.uni-paderborn.de/lantelme/parabolic-stationary-asfem}}, which also contains all numerical experiments. 

\section{Heat equation on stationary surfaces}
\label{ch:surfacePDE}

Let us consider the surface heat equation
\begin{equation}
\label{eq:heat_strong}
\begin{aligned}
\partial_t u(x,t) - \varDelta_\Ga u (x,t) &= f(x,t) \quad &&\forall  (x,t) \in  \Ga \times (0,T_{\max}] \\
u(x,0) &= u^0 (x) \quad &&\forall x \in \Ga ,
\end{aligned}
\end{equation}
where $\Ga \subset \R^3$ is always assumed to be a two-dimensional, closed, sufficiently smooth (at least $C^2$), stationary surface,  whose principal curvatures and their derivatives are bounded,  further $f \in C(0,T_{\max};L^2(\Ga))$ is a given inhomogeneity, and $u^0 \in L^2 (\Ga)$ is a given initial value.

\textbf{Weak formulation.}
Using Green's formula on closed surfaces \cite[Theorem~2.14]{Dziuk2013FiniteEM} the weak formulation reads:
Find $u \colon \Ga \times [0,T_{\max}] \rightarrow \R$, with $u(\cdot,0) = u^0$, such that
\begin{equation}
\label{eq:weak_form}
(\partial_t u(\cdot,t),v)_{\Ga} + (\nabla_\Ga u (\cdot,t), \nabla_\Ga v)_{\Ga} = (f,v)_{\Ga}, \quad \text{for all } v \in H^1 (\Ga), 
\end{equation}
for almost every $t \in (0,T_{\max})$. We denote the $L^2$-scalar product on $\Ga$ by $(\cdot,\cdot)_{\Ga} := (\cdot,\cdot)_{L^2(\Ga)}$.

\subsection{Surface definitions and operators}
\label{section:surface_operators}
We use the same setting introduced in \cite{Dziuk1988,ESFEM2007,Dziuk2013FiniteEM}.
The surface $\Ga$ is assumed to be  (at least a $C^2$ surface)  given as the zero level-set of a signed distance function $d \colon \R^{3} \rightarrow \R$. 
 We assume that $\Ga$ lies within an open set $U \subset \R^3$, and is represented by a signed distance function $d\colon U \to \R$ such that $\Ga = \{ x \in \R^3 \mid d(x) = 0\}$. We further let $U$ be a strip around $\Ga$ of sufficiently small width $\delta < \big( \max_{x \in \Ga}\{|\kappa_1(x)|,|\kappa_2(x)|\} \big)^{-1}$, where $\kappa_i(x)$ denotes the principal curvatures at a point $x$. 
The tangential (or surface) gradient of a scalar function $u\colon \Ga \to \R$ is given by, see, e.g.,~\cite[Section~2.1 and Definition~2.3]{Dziuk2013FiniteEM},
\begin{equation*}
	\nabla_{\Ga} u := P \nabla \overline{u} := \nabla \overline{u} - (\nabla \overline{u} \cdot \nu) \nu ,
\end{equation*} 
where $\nu \colon \Ga \to \R^3$ is the outer unit normal field to $\Ga$, and $\overline{u}$ is an extension of $u$ into $U$. The tangential gradient is independent of the extension.

The Laplace--Beltrami operator is given by $\varDelta_\Ga u = \nabla_{\Ga} \cdot \nabla_{\Ga} u$, where the surface divergence of a vector field $w \colon \Ga \rightarrow \R^3$ is defined by $\nabla_{\Ga} \cdot w = \sum_{j=1}^3 (\nabla_{\Ga} w_j)_j$.
Finally, the extended Weingarten map is denoted by $\mathcal{A} = \nabla_\Ga \nu$.

 We note that, if some constructions are avoided, e.g.~using the Weingarten map, then slightly less regularity $C^{1,\alpha}$ on $\Ga$ is also sufficient, see, e.g., \cite{BonitoDemlowNochetto}.

\section{Spatial and temporal discretizations}
\label{ch_full_discr}
\subsection{Surface finite element method}
\label{section:SFEM}

We start by describing the linear surface finite element method, following \cite{Dziuk1988,Dziuk2013FiniteEM}, and paying particular attention to issues arising due to adaptivity, see \cite{AdaptiveFEMBeltrami,Bonito2013AFEMFG}.

\textbf{Discrete surface, NVB refinement, and shape-regularity.} 
The surface is approximated by an admissible triangulation $\mathcal{T}_h  \subset U$,  which means following \cite{Dziuk2013FiniteEM} we require that the triangulation is an interpolation of $\Ga$ (all nodes of $\Ga_h$ lie on $\Ga$),  shape-regular  (there is a constant $\varrho > 0$ such that $\frac{h_T}{r_T} \leq \varrho$ holds for all $T \in \Ga_h$, where $h_T$ is the maximal edge length of the triangle and $r_T$ is the radius of its inscribed circle) and not a double covering. We let $h = \max_T h_T$,  which must satisfy $h \leq h_0$ with a sufficiently small $h_0 > 0$ due to $\Ga_h \subset U$.

Preserving shape-regularity of triangulations of surfaces throughout refinement and coarsening is non-trivial due to lifting nodes, see, e.g., the discussion in  \cite[Section~2.2]{AdaptiveFEMBeltrami}. In comparison, for the flat case appropriate refinement algorithms guarantee shape-regularity. 
We briefly sketch some possible approaches to deal with this assumption.

Section~2.2 of \cite{AdaptiveFEMBeltrami} argues that the lift introduces perturbations which are asymptotically negligible, thus meshes will not be distorted arbitrarily. Further, using particular refinement algorithms, like the newest-vertex bisection (NVB refinement), the number of elements in refined node-patches can be bounded using the number of elements per patch of the initial mesh. The error analysis of \cite{AdaptiveFEMBeltrami} does not assume shape-regularity, but the arising penalisation term is not included in their adaptive algorithm. This penalisation term was tracked during computation and was of higher order for their examples (see, e.g.,~\cite[Section~5.1]{AdaptiveFEMBeltrami}).

Section~1 of \cite{Bonito2013AFEMFG} proposes a strategy to guarantee the mesh intrinsic properties via some novel adaptive routine. For suitable initial meshes, a refinement process is constructed which consists of two parts: resolve the geometry, then refine based on the a posteriori error indicators. In \cite[Lemma~5.2]{Bonito2013AFEMFG} they have shown a condition, which guarantees that fine enough resolution of the initial surface is sufficient to retain shape-regularity. 

We will restrict the analysis to using newest-vertex bisection (NVB) and, following \cite{AdaptiveFEMBeltrami,APostElliptic,Bonito2013AFEMFG,BonitoDemlowNochetto}, to deriving appropriate a posteriori error estimates under the assumption that shape-regularity is retained, since reasonable refinement methods seem to maintain shape-regularity.

\textbf{Surface finite element semi-discretisation.} 
The discrete tangential gradient is given, see, e.g.,~\cite[Section~4.4]{Dziuk2013FiniteEM},  elementwise  by
\begin{equation}
\label{eq:discrete tangential gradient}
	\nabla_{\Ga_h} u_h := P_h \nabla \overline{u_h} := \nabla \overline{u_h} - (\nabla \overline{u_h} \cdot \nu_h) \nu_h  ,
\end{equation}
 where $\overline{u_h}$ is an extension of $u_h$ into $U$ which is constant along normals. The discrete tangential gradient is also independent of the extension.  

The finite element space on $\Ga_h$ is given as
\begin{equation}
\label{eq:def_S_h}
\begin{aligned}
S_h &= \{\phi \in C^0(\Ga_h) \mid \phi|_T  \, \, \text{linear affine for all} \, \, T \in \Ga_h \} \\
&= \text{span} \{\phi_1, \dots, \phi_{N}\} .
\end{aligned}
\end{equation}
The semi-discrete problem then reads: Find $u_h(\cdot,t) \in S_h$ such that
\begin{equation}
\label{eq:semi-discrete heat}
\left(\partial_t u_h(\cdot,t),v_h \right)_{\Ga_h} + (\nabla_{\Ga_h} u_h(\cdot,t), \nabla_{\Ga_h} v_h)_{\Ga_h} = (f_h(\cdot,t),v_h)_{\Ga_h} , \quad \text{for all } v_h \in S_h ,
\end{equation}
where $u_h(\cdot,0) = \widetilde{\pi}_h u^0$ and $f_h(\cdot,t) = \widetilde{\pi}_h f(\cdot,t)$ are, respectively, the $L^2$-projections of the initial data and right-hand side. 
We note that other interpolations, e.g., of Clément- or Scott--Zhang-type, are also viable, as is suggested in \cite{Verfuerth2003}; for their surface version, we refer to \cite[Chapter 2.4]{APostElliptic} and \cite[Chapter 3.1]{AdaptiveFEMBeltrami}.

\textbf{Lift.} 
To take the approximation back onto $\Ga$, points and functions are lifted between $\Ga_h$ and $\Ga$ via the closest point projection  within the strip $U$, see, e.g.,~\cite[Section~4.1]{Dziuk2013FiniteEM} or \cite[Section~2.1]{AdaptiveFEMBeltrami}:  The lifted point $x^\ell \in \Ga$ is the unique solution of  the closest-point projection  
\begin{equation}
\label{eq:closest point projection}
	x^\ell := x - d(x) \, \nu (x^\ell) \qquad \text{for $x \in \Ga_h \subset U$}.
\end{equation}
 The uniqueness of $x^\ell$ follows by the $C^2$-regularity of $\Ga$ and smallness of the width $\delta$ of the strip $U$. This further implies that the map from $\Ga_h$ to $\Ga$, given by the lift $x \mapsto x^\ell$, is bijective.  

Consequently, the \emph{lift} of a function $w \colon \Ga_h \rightarrow \R$ onto $\Ga$ is given by $w^{\ell} (x^\ell) := w (x)$, while the \textit{unlift}  $w^{-\ell} \colon \Ga_h \to \R$  is defined  such that  $(w^{-\ell})^\ell = w  \colon \Ga \to \R$ holds.  

Dziuk \cite[Lemma~3]{Dziuk1988} derived norm equivalences for lifted functions, for any $\eta_h \colon \Ga_h \to \R$,
\begin{equation}
\label{eq:norm_equiv}
	\begin{gathered}
		\frac{1}{c} \|\eta_h\|_{L^2(\Ga_h)} \leq \|\eta_h^\ell\|_{L^2 (\Ga)} \leq c \|\eta_h\|_{L^2(\Ga_h)} , \\
		\frac{1}{c} \|\nabla_{\Ga_h} \eta_h\|_{L^2(\Ga_h)} \leq \|\nabla_{\Ga} \eta_h^\ell\|_{L^2 (\Ga)} \leq c \|\nabla_{\Ga_h} \eta_h\|_{L^2(\Ga_h)} .
	\end{gathered}
\end{equation}

\subsection{Full discretization}
We introduce $K$ time steps $0=t^0 < t^1 < \dotsb < t^K = T_{\max}$ which build intervals $[t^{n-1}, t^n]$ of length $\tau^n$, such that $\sum_{j=1}^n \tau^j = t^n \leq T_{\max}$. For each time step we assume that an admissible triangulation $\Ga_h^n$ is given.

Even though \eqref{eq:heat_strong} is posed on a stationary surface, due to refinement and coarsening the discrete surface may change in each time step, i.e., $\Ga_h^{n-1} \neq \Ga_h^{n}$ in general. This temporal dependence will be reflected by the superscript $^n$, e.g., for discrete surfaces $\Ga_h^n$, finite element spaces $S_h^n:= \text{span} \{\phi_1^n, \dotsc, \phi_{N^n}^n\}$, see Section~\ref{section:SFEM}, with time-dependent degrees of freedom $N^n$, etc. We employ the same convention for the lift $^{\ell^n}: \Ga_h^n \to \Ga$.   
If the time interval is clear from the context we omit the additional superscript.

\textbf{The backward Euler method on variable meshes.}
To be able to define the implicit Euler time discretization on consecutive meshes $\Ga_h^{n-1}$ and $\Ga_h^n$ we need to introduce a \textit{refinement interpolation operator} $\IntRef \colon S_h^{n-1} \rightarrow S_h^n$. This interpolation operator maps functions on $\Ga_h^{n-1}$ to functions on $\Ga_h^n$.
Therefore, it is determined by the
refining and coarsening of subsequent surface meshes at time $t^{n-1}$ and $t^n$.  The refinement interpolation is described in detail below, see also Section~\ref{section:refinement interpolation - implementation}. 

This refinement interpolation enables us to define the backward difference time discretization:
\begin{equation*}
\label{discrete_time_deriv}
\partial_\tau  u_h (x,t) =  \frac{1}{\tau^n} \Big( u_h^n (x) - \IntRef u_h^{n-1} (x) \Big) \qquad n\geq 1, \ t \in (t^{n-1},t^n], \ x \in \Ga_h^n.
\end{equation*}

Thus, the full discretization of \eqref{eq:weak_form} reads: Find a sequence $(u_h^n)_{n=0}^K$ such that $u_h^n \in S_h^n$ solves, for $n \geq 1$,
\begin{equation}
\label{eq:discreteheat}
\left( \frac{u_h^n - \IntRef u_h^{n-1}}{\tau^n},v_h \right)_{\Ga_h^n} + (\nabla_{\Ga_h^n} u_h^n , \nabla_{\Ga_h^n} v_h)_{\Ga_h^n} = (f_h^n,v_h)_{\Ga_h^n} \qquad \forall v_h \in S_h^n,
\end{equation}
with initial data $u_h^0 = (\pi_h u^0)^{-\ell}$.

\textbf{Refinement interpolation.} 
\label{sec_sub:ref_int}
The refinement interpolation operator $\IntRef$ is based on the following idea: At a time step $t^{n}$ the refinement of the surface $\Ga_h^{n-1}$ yields an intermediate mesh with additional nodes. The surface $\Ga_h^n$ is then defined by lifting all these new nodes onto $\Ga$, to re-establish the interpolation property of the discrete surface.

 The values of the finite element function $u_h^{n-1}$ are computed in the new nodes before lifting them. The newly obtained values on the intermediate surface are combined with the pre-existing nodal values of $u_h^{n-1}$. Finally, the linear combination of the basis on $\Ga_h^n$ by the combined nodal values defines $\IntRef u_h^{n-1}$.  
Coarsening is handled analogously,  where the nodal values of removed nodes are simply forgotten.  
See Section~\ref{section:refinement interpolation - implementation} for  precise  details on the implementation of this process.

\textbf{Piecewise linear interpolation in time.} 
 
Given $u_h^n \in S_h^n$, for $n=0,\dotsc,K$, we define two piecewise linear interpolations: one on the \emph{continuous} and one on the \emph{discrete} surface.

We construct our approximation $u_{h, \tau}$ to $u$ by lifting the discrete solutions of \eqref{eq:discreteheat} and affine interpolation in time. We define the following function on $u_{h,\tau} \colon \Ga \times [0,T_{\max}] \to \R$:
\begin{equation}
\label{eq:lifted_discrete_sol}
	u_{h,\tau} (x,t) := \frac{t-t^{n-1}}{\tau^n} \big( u_h^n \big)^{\ell^n} (x) + \frac{t^n -t}{\tau^n} \big( u_h^{n-1} \big)^{\ell^{n-1}} (x) , \qquad \text{for any $t \in [t^{n-1},t^n]$}.
\end{equation}
Further we define a piecewise finite element function on the discrete surfaces \linebreak $\baruhtau(x,t) \colon \big(\Ga^0 \times \{0\}\big) \cup \big(\bigcup_{n=1}^K \Ga_h^n \times (t^{n-1},t^n]\big) \rightarrow \R$:
\begin{equation}
\label{eq:fully discrete solution definition}
	\baruhtau(x,t) = \frac{t-t^{n-1}}{\tau^n} u_h^n(x) + \frac{t^n -t}{\tau^n} \IntRef u_h^{n-1} (x) , \qquad \text{for any $t \in (t^{n-1}, t^n]$}.
\end{equation}
and $\baruhtau(x,0) = u_h ^0(x)$.
\begin{remark}
\label{remark:uhtau functions}
	(i) The unlift of $u_{h,\tau} \colon \Ga \to \R$ to the discrete surface $\Ga_h^n$ is not trivial, since the involved lift operation at time $t \in [t^{n-1},t^n]$ could arise from two different discrete domains $\Ga_h^{n-1}$ and $\Ga_h^n$.
	
	(ii) It is crucial to note that the function $u_{h,\tau}$ is $C^1$ in time, whereas, due to coarsening, its variant $\baruhtau$ on the discrete surface is \emph{discontinuous}, but piecewise differentiable. Therefore, time derivatives $\partial_t \baruhtau$ are understood piecewise on time intervals $(t^{n-1}, t^n]$. 
\end{remark}

\subsection{Smallest common triangulation and corresponding finite element space}
\label{sec:common_tri}

\paragraph{Smallest common refinement}
Following \cite[Section~2.2.2]{AdaptiveAlgHeatEq} we introduce the smallest common refinement of subsequent meshes $\Ga_h^{n-1}$ and $\Ga_h^n$, denoted by $\Ga_h^{\comesh} := \Ga_h^{n-1} \oplus \Ga_h^n$, which will serve as a common triangulation to compare functions on these meshes. Note that the discrete surfaces $\Ga_h^{n-1}$ and $\Ga_h^n$ may not be refinements of one another, but they both and $\Ga_h^{\comesh}$ are refinements of the initial mesh $\Ga_h^0$. It is worthwhile to note, that the main difficulty in constructing the common refinement is the requirement to lift all nodes of a mesh onto $\Ga$.

The smallest common refinement is defined by the main idea: $\Ga_h^{\comesh}$ is the collection of all common elements of the parent meshes, and always the refined part of $\Ga_h^n$ or $\Ga_h^{n-1}$ if an element was refined or coarsened.

Nodes which appear in both parent meshes are referred to as common nodes of $\Ga_h^{n-1}$ and $\Ga_h^n$ (marked by $\bullet$ in the top row of Figure~\ref{fig:nodal_values_of_common_triangulation}). 

The elements of the smallest common refinement $\Ga_h^{\comesh}$ are:
\begin{enumerate}
	\item[--] An identical element spanned by common nodes in both parent meshes $\Ga_h^{n-1}$ and $\Ga_h^n$ (see, e.g., the middle element in Figure~\ref{fig:nodal_values_of_common_triangulation}).
	\item[--] A set of elements of $\Ga_h^{n-1}$ which were coarsened to a single element of $\Ga_h^{n}$ spanned by common nodes (see, e.g., the left-hand side of $\Ga_h^{n-1}$ in Figure~\ref{fig:nodal_values_of_common_triangulation}).
	\item[--] A set of elements of $\Ga_h^n$ obtained by refinement of a single element of $\Ga_h^{n-1}$ spanned by common nodes (see, e.g., the right-hand side of $\Ga_h^n$ in Figure~\ref{fig:nodal_values_of_common_triangulation}).
\end{enumerate}
The smallest common refinement $\Ga_h^{\comesh}$ is sketched in the bottom row of Figure~\ref{fig:nodal_values_of_common_triangulation}.

Note that the interpolating points between common nodes, marked by grey {\tiny $\blacksquare$} in the top row of Figure~\ref{fig:nodal_values_of_common_triangulation}, are not nodes of the mesh $\Ga_h^{n-1}$ or $\Ga_h^n$. We denote these meshes by $\widehat{\Ga}_h^{n-1}$ and $\widehat{\Ga}_h^n$, respectively, which by construction satisfy: the mesh obtained by lifting the nodes of either $\widehat{\Ga}_h^{n-1}$, or $\widehat{\Ga}_h^{n}$ yields the discrete surface $\Ga_h^{\comesh}$.  In fact, only the interpolating points needed to be lifted, i.e.~the ones marked by grey {\tiny $\blacksquare$} in the top row of Figure~\ref{fig:nodal_values_of_common_triangulation}.

\begin{figure}[htbp]
	\centering
	\includegraphics[width=\linewidth]{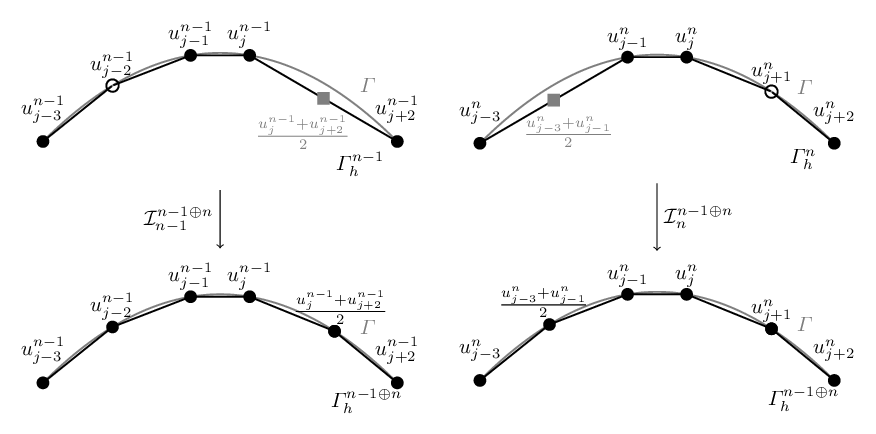}
\caption{Construction of smallest common refinement $\Ga_h^{\comesh}$ and common interpolation operators $\mathcal{I}_k^{\comesh}$ ($k=n-1,n$). The $\bullet$ marks (top row) denote common nodes of $\Ga_h^{n-1}$ and $\Ga_h^n$. The common triangulation (bottom row) keeps the more refined parts of the parent triangulations. (The nodal values are numbered locally here.)}
\label{fig:nodal_values_of_common_triangulation}
\end{figure}

\paragraph{Common finite element space}
The common finite element space $S_h^{\comesh}$ corresponding to the smallest common refinement $\Ga_h^{\comesh}$ is defined exactly as in \eqref{eq:def_S_h}.

In order to map functions on $\Ga_h^{n-1}$ or $\Ga_h^n$ to functions on $\Ga_h^{\comesh}$, we define the common interpolation operators $\mathcal{I}_k^{\comesh} \colon S_h^k \rightarrow S_h^{\comesh}$ for both $k = n-1$ and $k=n$, respectively.
 
The nodal values of $\mathcal{I}_k^{\comesh}$ are given by:
\begin{enumerate}
	\item[--] All nodes which appear in both $\Ga_h^{n-1}$ and $\Ga_h^{\comesh}$ or $\Ga_h^n$ and $\Ga_h^{\comesh}$, respectively, keep their nodal values (e.g., nodes marked by $\bullet$ and $\circ$ in the top row of Figure~\ref{fig:nodal_values_of_common_triangulation}).
	\item[--] Nodes which are not present in either $\Ga_h^{n-1}$ or $\Ga_h^n$ are assigned a nodal value using the Lagrangian interpolation (e.g., marked by grey {\tiny $\blacksquare$} in the top row of Figure~\ref{fig:nodal_values_of_common_triangulation}).
\end{enumerate}
\ebk

\section{A reliable and efficient residual-based error estimator}
\label{section:ReliablePara}

We use residual-based error analysis to derive a reliable and efficient error indicator (up to the data oscillation term \eqref{eq:osc} high-order geometric \eqref{eq:indicator - geo} and coarsening \eqref{eq:indicator - coarse} terms) which forms the basis of a space--time adaptive algorithm for parabolic surface PDEs.
 Where\-as it is usual to exclude the data oscillation term in the a posteriori analysis, see \cite{Verfuerth2003}.  The geometric and coarsening terms can also be excluded in the analysis for the following reasons:
The geometric terms and part of the coarsening term are of higher order and typically not dominant, but if needed, they can be handled separately in computation by the strategies of \cite{Bonito2013AFEMFG}. As the coarsening indicator \eqref{eq:indicator - coarse} can be made arbitrarily small by coarsening less elements, it also is not of concern in the a posteriori analysis.

\subsection{Residual}
\label{Sec_Res}

For any $t \in (t^{n-1}, t^n]$ and $v \in H^1 (\Ga)$, the residual to \eqref{eq:fully discrete solution definition} is defined by
\begin{equation}
\label{residual}
	\langle \mathcal{R}(u_{h,\tau}), v \rangle = \left(\partial_t u_{h,\tau} (t) , v \right)_{\Ga} + (\nabla_\Ga u_{h,\tau} (t), \nabla_\Ga v)_{\Ga} - (f(t),v)_{\Ga} .
\end{equation}
The error indicators are given by 
\begin{subequations}
\label{eq:indicator}
	\begin{align}
		\label{eq:indicator - split}
		\eta^n &\phantom{:}= (1+h) \Big( \tau^n \Big( (\eta_{h}^n)^2 +  (\eta_{\tau}^n)^2\Big) \Big)^{\frac{1}{2}} +  \Big( \tau^n\Big( (\mathcal{G}^n)^2 + (\eta_c^n)^2 \Big) \Big)^{\frac{1}{2}}   , \qquad \text{with} \\
		\label{eq:indicator - spatial}
		(\eta_{h}^n)^2 &:= \! \sum_{S \in \mathcal{S}_h^n} \!\! h_S \big\|\llbracket \nabla_{T} u_h^n \cdot \mathrm{n}_S \rrbracket \big\|_{L^2(S)}^2 \! +  \!\! \sum_{T \in  \mathcal{T}_h^n} \!\! h_T^2 \Big\|\frac{1}{\tau^n} (u_h^n - \IntRef u_h^{n-1})-f_h^n \Big\|_{L^2(T)}^2,  \\
		\label{eq:indicator - temporal}
		(\eta_{\tau}^n)^2 &:= \! \sum_{T \in \mathcal{T}_h^n} \big\|\nabla_{T} (u_h^n - \IntRef u_h^{n-1})\big\|_{L^2 (T)}^2, \\
		\label{eq:indicator - coarse}
		(\eta_{\textnormal{c}}^n)^2 &:= \! \sum_{T \in \mathcal{T}_h^{\comesh}} \frac{1}{(\tau^n)^2}\|\mathcal{I}_n^{\comesh} \IntRef u^{n-1}_h - \mathcal{I}_{n-1}^{\comesh} u^{n-1}_h\|_{L^2 (T)}^2 \\
		&\ \hspace*{4mm}+ \|\nabla_{\Ga_h^{\comesh}} \big(\mathcal{I}_n^{\comesh} \IntRef u^{n-1}_h - \mathcal{I}_{n-1}^{\comesh} u^{n-1}_h \big)\|_{L^2 (T)}^2 \nonumber\\
		 & \hspace*{-2em}+  \Big(h_T^2 + \frac{h_T^4}{(\tau^n)^2}\Big)  \Big( \| \nabla_{\Ga_h^{\comesh}} \mathcal{I}_{n-1}^{\comesh} u_h^{n-1} \|_{L^2 (T)}^2  + \|\nabla_{\Ga_h^{\comesh}} \mathcal{I}_n^{\comesh} \IntRef u^{n-1}_h \|_{L^2 (T)}^2 \Big), \nonumber\\
		\label{eq:indicator - geo}
		 (\mathcal{G}^n)^2 &:= \sum_{T \in \mathcal{T}_h^n}  h_T^4 \| \nabla_{\Ga_h^n} u_h^n \|_{L^2 (T)}^2  .
	\end{align}
\end{subequations}
where the set of all elements of $\Ga_h^n$ is denoted by $\mathcal{T}_h^n$, and the set of all edges is denoted by $\mathcal{S}_h^n$. Furthermore, the jump across an edge $S \in \mathcal{S}_h^n$ is given by $\llbracket w \rrbracket|_S := w|_{T_1} - w|_{T_2}$, where $T_1$ and $T_2$ are the two triangles from $\mathcal{T}_h^n$ sharing the edge $S$.  The tangential gradient on an element $T$ is denoted by $\nabla_T$,  and $\mathrm{n}_S$ is the outward edge-normal to $S$ with respect to $T_1$.

The global in space and local in time indicator $\eta^n$, is defined by the spatial and temporal indicators, $\eta_h^n$ and $\eta_\tau^n$, respectively. And further is dependent on the coarsening $\eta_{\textnormal{c}}^n$ and geometric errors $\mathcal{G}^n$. 

One can observe that the spatial and temporal indicators are related to indicators for the Euclidean case, see \cite[equations (4.8)--(4.10)]{Verfuerth2003} (handling oscillations slightly differently).  We state some crucial differences how the reliability and efficiency proofs for surface PDEs differ from their flat domain counterparts: 
\begin{itemize}
	\item[-] the $H^1$-norm required by the problem, 
	\item[-] the additional factor of $h$ of the spatial and temporal indicators in \eqref{eq:indicator - split}, 
	\item[-] the surface refinement interpolant $\IntRef$, 
	\item[-]  the high-order geometric terms $\mathcal{G}^n$, 
	\item[-]  the coarsening indicator $\eta_{\textnormal{c}}^n$ evaluated on a non-trivial common triangulation,
	\item[-] and the indicators being defined on a discrete surface. 
\end{itemize} 
These changes require a careful extension of the residual analysis known in the flat, Euclidean case.
\begin{remark}
Note that we choose to measure the residual with respect to $u_{h,\tau}$ due to the continuity, which allows us to show energy estimates needed for the equivalence of the error and the residual in Section~\ref{section:error and residual equivalence}.
\end{remark}
\subsection{Main result: reliability and efficiency}
We will now formulate the main result of this paper, which provides reliability and efficiency for the above defined residual-based error indicator. 

We will use the notation
\begin{equation}
	\label{eq:graph_norm}
	\|v\|_{X(s,t;\Ga)}^2 := \|v\|_{L^\infty (s,t; L^2 ( \Ga))}^2 +\|v\|_{L^2 (s,t; H^1 ( \Ga))}^2 + \|\partial_t v\|_{L^2 (s,t; H^{-1} ( \Ga))}^2 .
\end{equation}

\begin{theorem}
\label{thm:upper_lower}
	Let $h \leq h_0$, with a sufficiently small $h_0 > 0$, the residual-based error estimator $\eta^n$ of \eqref{eq:indicator}, and the error between the solution $u$ of \eqref{eq:weak_form} and  the numerical approximation $u_{h,\tau}$ \eqref{eq:lifted_discrete_sol},  obtained via \eqref{eq:discreteheat}, satisfies the following estimates, for $0 < t^n = \tau^1 + \dotsb + \tau^n \leq T_{\max}$:
	
	\begin{subequations}
	\indent(a) A global upper bound in space and time (reliability up to oscillation):
	\begin{equation}
		\label{eq:reliability estimate}
			\|u-u_{h,\tau} \|_{X(0,t^n;\Ga)} \leq c^\star \left(\sum_{j=1}^n (\eta^j)^2 + \|f -f_h^\ell \|_{L^2(0,t^n; H^{-1} (\Ga))}^2 + \|u^0-(u_h^0)^\ell \|_{L^2(\Ga)}^2 						\right)^{\frac{1}{2}} .
	\end{equation}
	\indent(b) A lower bound which is global in space and local in time (efficiency up to oscillation, geometric and coarsening defects):
	\begin{equation}
		\label{eq:efficiency estimate}			
			\eta^n \leq \ c_\star \Big( \|u-u_{h,\tau} \|_{X(t^{n-1},t^n; \Ga)} +\|f - f_h^\ell \|_{L^2(t^{n-1},t^n; H^{-1} (\Ga))} +  (\tau^n)^{\frac{1}{2}} \big(\mathcal{G}^n + \eta_c^n 			\big) \Big).
	\end{equation}
	\end{subequations}
	The constants $c_\star > 0$ and $c^\star > 0$ are independent of $h$, $t^n$, and $\tau^n$, but depend on the shape-regularity constant $\varrho^n$ of $\Ga_h^n$, and on $\Ga$. The constant $c^\star$ additionally depends on the shape-regularity constants $\varrho^j$ of the prior meshes $\Ga_h^j$.
\end{theorem}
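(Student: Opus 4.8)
The plan is to establish reliability (a) and efficiency (b) by adapting the classical residual-based a posteriori framework of Verfürth to the surface setting, with two essential new ingredients: lifting all quantities to the continuous surface $\Ga$ and carefully tracking the geometric error between $\Ga_h^n$ and $\Ga$, and handling the refinement interpolant $\IntRef$ which couples meshes at consecutive time levels. First I would set up an \emph{error equation}: for a test function $v \in H^1(\Ga)$, subtract the weak formulation \eqref{eq:weak_form} from (a lifted version of) the discrete scheme \eqref{discreteheat}, obtaining an identity for $\partial_t(u-u_{h,\tau}^\ell)$ in terms of the residual $\mathcal{R}(u_{h,\tau}^\ell)$ plus geometric perturbation terms arising because the bilinear form and mass form on $\Ga_h^n$ differ from those on $\Ga$ (these differences are $O(h^2)$ by the standard geometric estimates of Dziuk, which explains the $(1+h^2)$ prefactor in \eqref{eq:indicator - split}). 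I would then test with $v = u - u_{h,\tau}^\ell$ and with $v = $ (a suitable dual/elliptic-projection object) to control the three pieces of the $X$-norm \eqref{eq:graph_norm} separately: the $L^\infty L^2$ part via a Gronwall argument after integrating the energy identity in time, the $L^2 H^1$ part from the resulting dissipation term, and the $L^2 H^{-1}$ part directly from the error equation by duality.

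The core of (a) is bounding $\langle \mathcal{R}(u_{h,\tau}^\ell), v\rangle$ from above. Here I would split $\mathcal R$ into a spatial part and a temporal part by inserting $\pm u_h^n$ (the time-endpoint value), so that $u_{h,\tau}^\ell(t) - (u_h^n)^\ell = \frac{t^n-t}{\tau^n}(\IntRef u_h^{n-1} - u_h^n)^\ell$, whose $H^1$-norm is exactly controlled by $\eta_\tau^n$. For the spatial part I would use a Clément- or Scott--Zhang-type quasi-interpolation $\Ih$ on $\Ga_h^n$ (as already flagged after \eqref{eq:semi-discrete heat}), exploit Galerkin orthogonality $\langle \mathcal{R}(u_{h,\tau}^\ell), (\Ih v_h)^\ell\rangle = (\text{geometric terms})$ using \eqref{discreteheat}, integrate by parts elementwise on $\Ga_h^n$ to produce the interior residual and the edge-jump residual of \eqref{eq:indicator - spatial}, and close with the standard local interpolation estimates $\|v - \Ih v\|_{L^2(T)} \le c\, h_T \|\nabla_{\Ga} v\|_{L^2(\omega_T)}$ and the analogous trace estimate on edges, transferred to $\Ga$ via the norm equivalences \eqref{eq:norm_equiv}. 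The geometric residual terms are then absorbed: by construction (as the introduction promises) they are dominated by $h^2$ times the spatial and temporal indicators, hence by $\eta^n$. Summing over $j = 1,\dots,n$ and invoking discrete Gronwall gives \eqref{eq:reliability estimate}, with the dependence on all prior quasi-uniformity constants $\varrho^j$ entering through the Gronwall accumulation of the per-step geometric/interpolation constants on each $\Ga_h^j$; the data terms $\|f - f_h^\ell\|$ and $\|u^0 - (u_h^0)^\ell\|$ appear from the oscillation splitting of $f$ and from the initial error.

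For (b), efficiency, I would use the classical bubble-function technique, working on $\Ga_h^n$. Testing $\mathcal R$ against interior bubbles $b_T$ supported on a single triangle and edge bubbles $b_S$ supported on the two triangles meeting at $S$, together with inverse estimates, yields the reverse bounds: the interior-residual term of $\eta_h^n$ is controlled by $\|u - u_{h,\tau}^\ell\|_{H^1}$ and the $H^{-1}$-norm of $\partial_t(u-u_{h,\tau}^\ell)$ plus oscillation, the jump term likewise, all over the local time slab $(t^{n-1},t^n)$; the temporal indicator $\eta_\tau^n = \|u_h^n - \IntRef u_h^{n-1}\|_{H^1(\Ga_h^n)}$ (and hence the coarsening indicator $\eta_{\textnormal{c}}^n$, absorbed per \eqref{eq:indicator - coarse}) is bounded by $\|u-u_{h,\tau}^\ell\|_{X(t^{n-1},t^n)}$ using that $u_h^n - \IntRef u_h^{n-1} = \tau^n \partial_\tau u_{h,\tau}$ and comparing with $\int_{t^{n-1}}^{t^n}\partial_t u$; the factor $\tau^n$ in \eqref{eq:indicator - split} makes the scaling match the integrated norm on the slab. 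Again the $O(h^2)$ geometric perturbations are harmless since $h \le h_0$.

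The main obstacle I anticipate is the bookkeeping around $\IntRef$ and the mesh change: unlike the flat, fixed-mesh case, $u_h^n$ and $u_h^{n-1}$ live on different discrete surfaces with different domains, so the discrete time derivative, the residual, and the error equation must all be consistently pulled back to $\Ga$, and one must verify that $\IntRef$ neither loses mass nor energy beyond controllable amounts — i.e. that $\|(\IntRef u_h^{n-1})^\ell - (u_h^{n-1})^\ell\|$ in the relevant norms is dominated by the indicators. Establishing the precise stability/consistency properties of $\IntRef$ (it should be $H^1$-stable and satisfy an approximation bound compatible with the local-lower-bound scaling) and confirming that the geometric residual genuinely decomposes into pieces controlled by $\eta_h^n$ and $\eta_\tau^n$ — rather than spawning uncontrolled new terms — is where the surface-specific work concentrates, and it is also what forces the per-step dependence on $\varrho^n$ and the accumulated dependence on $\varrho^j$ in $c^\star$.
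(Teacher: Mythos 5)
Your proposal follows essentially the same route as the paper: the same residual decomposition into spatial, temporal, coarsening, geometric, and oscillation parts (with the geometric part bounded by $h^2$ times the spatial and temporal residuals), Scott--Zhang interpolation with Galerkin orthogonality and elementwise integration by parts for the spatial upper bound, bubble functions for the lower bound, the energy-plus-Gronwall equivalence of error and residual, and the Verf\"urth-style absorption argument on the local time slab for efficiency. The plan is sound and consistent with the paper's proof.
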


Theorem~\ref{thm:upper_lower} will be proved in the subsequent section, which maintains the structure (marked by (a) and (b)) of the above theorem: results and estimates marked with (a) and (b) respectively indicate which of them will be used to prove \eqref{eq:reliability estimate} and \eqref{eq:efficiency estimate}.

\begin{remark}
\label{remark:h0 sufficiently small}
	In an adaptive setting the assumption $h \leq h_0$ might seem counter-intuitive, as it restricts coarsening. It is, however, inevitable to ensure that the closest point projection \eqref{eq:closest point projection} is unique. It is further required by all geometric approximations, see, e.g.,~\cite{Dziuk2013FiniteEM}. The constant $h_0$ solely depends on the geometry of $\Ga$, and it enforces that throughout the adaptivity one cannot coarsen beyond some suitable initial triangulation, where the lift is bijective.
\end{remark}

\begin{remark}
\label{remark:Geometric term and coarsening indicator}
The above theorem does not include efficiency for the high-order geometric term and coarsening indicator. As stated in \cite[Section~4.2]{APostElliptic} the geometric term arising from the stiffness term of \eqref{eq:res_space} is not the main concern when dealing with convergence and optimality of an adaptive algorithm. As \cite[Lemma~5.8 and Chapter~6.1]{Bonito2013AFEMFG} suggests they can be handled utilising an additional adaptive routine to guarantee that the geometric errors are bounded by the spatial indicator, which infers that the lower bound \eqref{eq:efficiency estimate} holds up to oscillation. The terms introduced by the coarsening, are of a similar structure or can be made arbitrarily small by coarsening less. 
\end{remark}

\section{Proof of Theorem~\ref{thm:upper_lower}}
\label{section:proofs}

The proof can be summarised as follows:

We start by showing the equivalence of the fully discrete residual $\mathcal{R}(u_{h,\tau})$ and the error $u-u_{h,\tau}$ in Section~\ref{section:error and residual equivalence}.

This enables us to decompose the residual $\mathcal{R}(u_{h,\tau})$ of \eqref{residual} into spatial, temporal, coarsening, and geometric residuals, and a data oscillation term (Section~\ref{sec:decomposition}). Then we focus on each residual separately.

An integral transformation identity and geometric bound (Section~\ref{section:integral trafo and geom estimates}) will be derived to bound the geometric residual by the spatial, temporal,  and geometric  indicators (Section~\ref{sec:geo_h1_res}). 
Afterwards the substantially new coarsening residual is analysed in Section~\ref{sec:res_coarse} where we derive tools to extend ideas for the flat case to surfaces. Our bounds yield further geometric error contributions and yield a framework to compute coarsening indicators which we expect to be applicable to moving domains.
The spatial and temporal residuals are carefully constructed such that the main ideas for their analysis in the flat case can be extended to surfaces (Section~\ref{section:spatial residual bound}--\ref{section:temporal residual bound}).

Finally, the combination of these results completes the proof of the theorem (Section~\ref{section:thm proof}).

\medskip
By $c$ we will always denote a positive constant, that is independent of $h$, $\tau^n$, and $n$ but may change its value between steps. 
Many results hold for a mesh size $h \leq h_0$, which is always understood with a  sufficiently small $h_0 > 0$, see Remark~\ref{remark:h0 sufficiently small}.

\subsection{Equivalence of the error and the residual}
\label{section:error and residual equivalence}

We now show that the residual and the error between $u$ and $u_{h,\tau}$, see \eqref{eq:lifted_discrete_sol}, are equivalent. Although the following equivalence result is formally the same as the one for the parabolic flat case \cite[Lemma~4.1]{Verfuerth2003}, its energy estimate-based proof requires additional steps due to the full $H^1$ setting of our problem.
\begin{proposition}
\label{prop:error and residual equivalence}
	For any $t^n \in (0,T]$, the error and the residual satisfy the following estimates in the graph norm \eqref{eq:graph_norm}:
	\begin{subequations}
		\begin{align}
			\label{eq:res_equiv_2}
			\|u -  u_{h,\tau} \|_{X(0,t^n;\Ga)} 
			\leq &\ c \left( \|u^0 - (u_h^0)^\ell \|^2 _{L^2(\Ga)} + \|\mathcal{R}(u_{h,\tau}) \|^2 _{L^2(0,t^n; H^{-1} (\Ga))} \right)^{1/2} , \\
			\label{eq:res_equiv_1}
			\int_0^{t^n} \langle \mathcal{R}(u_{h,\tau}), w \rangle \d t 
			\leq &\ \|u - u_{h,\tau} \|_{X(0,t^n;\Ga)} \|w \|_{L^2(0,t^n; H^1 (\Ga))} ,
		\end{align}
	\end{subequations}
	where the constant $c>0$ depends on $T$, but is independent of $h$ and $\tau^n$.
\end{proposition}

\begin{proof}
	The proof relies on the error equation, which -- recalling \eqref{eq:lifted_discrete_sol} and Remark~\ref{remark:uhtau functions} -- reads, for all $t \in [0,T]$ and for any $v \in H^1(\Ga)$:
	\begin{equation}
		\label{eq:proof_equiv_err_res}
		(\partial_t (u-  u_{h,\tau}),v)_{\Ga} + (\nabla_\Ga (u-  u_{h,\tau}),\nabla_\Ga v)_{\Ga} 
		= \langle \mathcal{R}(u_{h,\tau}),v \rangle,
	\end{equation}
	which is obtained by subtracting the definition of the residual \eqref{residual} from the weak formulation \eqref{eq:weak_form}.  
	
	(a) The upper bound \eqref{eq:res_equiv_2} is shown using an energy estimate. For $t \in (t^{n-1},t^n]$, we test the error equation \eqref{eq:proof_equiv_err_res} by $e := u -  u_{h,\tau}$, which yields (dropping the omnipresent $\Ga$ within norms)
	\begin{align*}
		\frac{1}{2} \frac{\d}{\d t} \|e\|_{L^2}^2  + \|\nabla_\Ga e\|_{L^2}^2 = \langle \mathcal{R}(u_{h,\tau}),e \rangle &\leq \|\mathcal{R}(u_{h,\tau})\|_{{H^{-1}}} \|e\|_{H^1} \\
		 &\leq \|\mathcal{R}(u_{h,\tau})\|_{{H^{-1}}} (\|e\|_{L^2} + \|\nabla_\Ga e\|_{L^2}).
	\end{align*}
	Upon applying Young's inequality and an absorption of the $H^1$-seminorm to the right-hand side, integrating the inequality over $[0,t^n]$,
	and using Gronwall's inequality yields
	\begin{equation*}
		\label{eq:proof_res_equiv_A}
		\|e\|_{L^\infty (0,t^n;L^2)}^2 \leq \exp({t^n}) \big( \|\mathcal{R}(u_{h,\tau})\|_{L^2(0,t^n;H^{-1})}^2 + \|e(0)\|_{L^2}^2 \big) .
	\end{equation*}
	which directly infers the $L^\infty(L^2)$ bound and the $L^2(H^1)$ estimate, via $\|e\|_{L^2(0,t^n;L^2)} \leq t^n \|e\|_{L^\infty(0,t^n;L^2)}^2$, i.e.\
	\begin{equation*}
		\|e\|_{L^2 (0,t^n;H^1)}^2  \leq (t^n \exp({t^n}) +1) \big( \|\mathcal{R}(u_{h,\tau})\|_{L^2(0,t^n;H^{-1})}^2 + \|e(0)\|_{L^2}^2 \big) .
	\end{equation*}
	
	To show the $L^2(H^{-1})$ estimate for $\partial_t e$, we rearrange \eqref{eq:proof_equiv_err_res}, and write
	\begin{equation*}
		\| \partial_t e \|_{H^{-1}} 
		= \sup_{\substack{v \in H^1(\Ga) \\ v \neq 0}} \frac{(\partial_t e, v)}{\|v \|_{H^1}}
		= \sup_{\substack{v \in H^1(\Ga) \\ v \neq 0}} \frac{ \langle \mathcal{R}(u_{h,\tau}),v \rangle - (\nabla_\Ga e,\nabla_\Ga v)_{\Ga} }{\|v \|_{H^1}} ,
	\end{equation*}
	which yields (in an interval-wise sense)
	\begin{equation*}
		\label{eq:proof_res_equiv_C}
		\| \partial_t e \|_{L^2(0,t^n;H^{-1})}^2 
		\leq 
		2 \|\mathcal{R}(u_{h,\tau})\|_{L^2(0,t^n;H^{-1})}^2 + 2 \|\nabla_\Ga e\|_{L^2 (0,t^n;L^2)}^2.
	\end{equation*}
	
	This completes the proof of the first estimate.
	
	(b) The lower bound \eqref{eq:res_equiv_1} directly follows by recalling \eqref{eq:proof_equiv_err_res} and \eqref{eq:graph_norm}, integrating over time, and applying standard estimates.
\end{proof}

\subsection{Residual decomposition}
\label{sec:decomposition}

 The equivalence result of Proposition~\ref{prop:error and residual equivalence} uses the approximation ${u_{h,\tau}}$ on the continuous surface $\Ga$, defined in \eqref{eq:lifted_discrete_sol}. This complicates important arguments in the a posteriori error analysis, since we are not able to simply unlift this function -- see Remark~\ref{remark:uhtau functions} -- and argue via norm equivalence. 

In order to guarantee the computability of the error indicators we insert the approximation $\baruhtau$ on the discrete surface, \eqref{eq:fully discrete solution definition}, into the residual equation \eqref{residual} as a zero. This will require us to deal with the difference due to using $\baruhtau$ instead of $u_{h,\tau}$, and will naturally introduce a coarsening indicator (similarly to \cite[Section~4.4]{Bartels2016}, \cite{chen2004feng_adap_flat, AdaptiveAlgHeatEq}). In spirit, this approach is from the analysis in the flat parabolic case,  (see, e.g.,~\cite{AdaptiveAlgHeatEq}, \cite[Section~4.4.2]{Bartels2016}), representing finite element functions on a joint mesh.

We divide the residual \eqref{residual} into a spatial, a temporal, a coarsening, a geometric, and an oscillation term, that is we respectively set:
\begin{align}
\label{eq:res_eq}
	\mathcal{R}({u_{h,\tau}}) = \mathcal{R}_h + \mathcal{R}_\tau  + \Rcoarse + \mathcal{R}_{\textnormal{g}} + \osc (f).
\end{align}

To state the decomposition we introduce the following notations: 
The Jacobian of the closest point projection $\mu_h^n := \frac{\d \sigma}{\d \sigma_h^n}$ expressed by the surface measures $\d \sigma$, $\d \sigma_h^n$ on $\Ga$ and $\Ga_h^n$ respectively. Note that $\mu_h^n$ can expressed in terms of the curvature, distance function, discrete and continuous normal vectors, see \cite[Proposition~2.1]{AdaptiveFEMBeltrami}. 
The projection $P_h^n$ from \eqref{eq:discrete tangential gradient}, and the operator 
\begin{equation}
\label{eq:projection operator R_h}
	\tilde{R}_h^n := \mu_h^n P_h^n \Big(I-\frac{\nu_h^n \nu^T}{\nu_h^n \cdot \nu}\Big)^T (I-d \mathcal{A})^{-1} (I-d \mathcal{A})^{-1} \Big(I-\frac{\nu_h^n \nu^T}{\nu_h^n \cdot \nu}\Big) ,
\end{equation} 
where $\mathcal{A}$ is the extended Weingarten map, and $d$ the signed distance function, see Section~\ref{section:surface_operators} and \ref{section:SFEM}.  Recall the definition of the unlift operation for functions, see Section~\ref{section:SFEM}. 
The operator $\tilde{R}_h^n$ is directly related to the one introduced in \cite[equation~(52)]{BonitoDemlowNochetto}.  

We define each term in \eqref{eq:res_eq} piecewise on each time interval as follows, for $n = 1, \dots, K$ let $t \in (t^{n-1}, t^n]$ and $v \in H^1(\Ga)$ arbitrary:

The \textit{spatial} residual $\mathcal{R}_h\equiv\mathcal{R}_h(t)$ (within $(t^{n-1}, t^n]$) is given by
\begin{subequations}
\label{eq:decom_subs}
\begin{align}
	\label{eq:res_space} 
	\langle \mathcal{R}_h, v \rangle = &\ \big( f_h^n, v^{-\ell} \big)_{\Ga_h^n} - \bigg( \frac{u_h^n - \IntRef u_h^{n-1}}{\tau^n}, v^{-\ell} \bigg)_{\Ga_h^n} \!\!\! - \big( \nabla_{\Ga_h^n} u_h^n , \nabla_{\Ga_h^n}  v^{-\ell} \big)_{\Ga_h^n}, \\
	\intertext{the \textit{temporal} residual $\mathcal{R}_\tau$ is given by} 
	\label{eq:res_temp} 
	\langle \mathcal{R}_{\tau} (t), v \rangle = &\ \big( \nabla_{\Ga_h^n} (u_h^n -\baruhtau),\nabla_{\Ga_h^n} v^{-\ell} \big)_{\Ga_h^n}, \\
	\intertext{the \textit{coarsening} residual $\Rcoarse$ is given by} 
	\label{eq:res_coarse} 
	\langle \Rcoarse (t), v \rangle = &\ \bigg( \partial_t \big((\baruhtau)^{\ell} - u_{h,\tau}\big),v \bigg)_{\Ga} \!\!\! + \Big( \nabla_\Ga \big((\baruhtau)^{\ell} - u_{h,\tau}\big),\nabla_\Ga  v \Big)_{\Ga} , \\
	\intertext{the \textit{geometric} residual $\mathcal{R}_{\textnormal{g}}$ is given by}
	\label{eq:res_geo} 
	\langle \mathcal{R}_{\textnormal{g}} (t), v \rangle = &\  \! \big( (1- \!\mu_h^n)(\partial_t  \baruhtau \! - \! f_h^n),v^{-\ell} \big)_{\Ga_h^n} \! + \! \big( (I \! - \tilde{R}_h^n)P_h^n \nabla_{\Ga_h^n} u_h^n, \nabla_{\Ga_h^n} v^{-\ell} \big)_{\Ga_h^n} \nonumber  \\
	&\ + \big( (I-\tilde{R}_h^n) P_h^n \nabla_{\Ga_h^n} ( \baruhtau-u_h^n ), \nabla_{\Ga_h^n} v^{-\ell} \big)_{\Ga_h^n}, \\
	\intertext{and the  (naturally $\Ga_h^n$-dependent)  data \textit{oscillation} by}
 	\label{eq:osc}	
	\osc(f(\cdot,\! t)) \! = &\ f(\cdot,t) - (f_h^n)^\ell.
\end{align}
\end{subequations}
 Recall, the notational convention regarding the lift, and that time derivatives of $\baruhtau$ are understood piecewise, see Remark~\ref{remark:uhtau functions}.

In contrast to all other residuals the coarsening residual is still based on integrals on the curved domain $\Ga$, we will introduce further tools in Section~\ref{sec:res_coarse} to be able to obtain the easily computable coarsening indicator \eqref{eq:indicator - coarse}, which is (up to higher order geometric contributions) similar to the coarsening indicators in flat domains \cite{chen2004feng_adap_flat, AdaptiveAlgHeatEq}.

\subsection{An integral transformation and a geometric estimate}
\label{section:integral trafo and geom estimates}

 To obtain the residual decomposition we insert $\pm (\baruhtau)^\ell$, the lifted version of \eqref{eq:fully discrete solution definition} into \eqref{residual}, the difference with $u_{h,\tau}$ yields the coarsening residual $\Rcoarse$.
By the linearity of the residual we are left with $\mathcal{R}((\baruhtau)^\ell)$ which simplifies the bounds of all other residuals.  We use the known integral transformations from $\Ga_h$ to $\Ga$, see, e.g.,~\cite[Lemma~5.1--5.2]{ESFEM2007}, or \cite[Lemma~4.7]{Dziuk2013FiniteEM}. We additionally use a transformation for integrals from $\Ga$ to $\Ga_h$,   see also \cite[Lemma~21 and Corollary~33]{BonitoDemlowNochetto}  . We note that this transformation is similar, yet substantially different, to the identity \cite[equation~(2.2.22)]{AdaptiveFEMBeltrami}, transforming integrals from $\Ga_h$ to $\Ga$. In particular the transformation from \cite{AdaptiveFEMBeltrami} is not invertible.

Since all results in this section hold for discrete surfaces at \textit{any time} $t^n$, we have dropped all superscripts $^n$. 

\begin{lemma}
\label{lemma:new integral trafo}
	Let $\Ga_h$ be an admissible triangulation for $h \leq h_0$, and let $\tilde{A_h} = \tilde{R}_h P_h$, then the following identity holds
	\begin{equation*}
	\label{eq:integral_trafo_new}
		\int_\Ga \nabla_\Ga v \cdot \nabla_\Ga w = \int_{\Ga_h} \tilde{A}_h \nabla_{\Ga_h} v^{-\ell} \cdot \nabla_{\Ga_h} w^{-\ell}.
	\end{equation*}
\end{lemma}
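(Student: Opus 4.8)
The plan is to derive the identity by a direct change of variables via the closest-point projection, paralleling the classical transformation of \cite[Lemma~4.7]{Dziuk2013FiniteEM} but carrying it out in the ``backward'' direction (from $\Ga$ to $\Ga_h$) rather than the ``forward'' direction. First I would recall the pointwise relationship between surface gradients: for a function $\phi$ on $\Ga$ and its unlift $\phi^{-\ell}$ on $\Ga_h$, the chain rule applied to the closest-point map $y(x) = x - d(x)\nu(y(x))$ gives a linear relation $\nabla_{\Ga} \phi = B_h \nabla_{\Ga_h} \phi^{-\ell}$ (composed with $y$), where $B_h$ is built from $P$, $P_h$, the factor $(I - d\mathcal{A})^{-1}$, and the oblique-projection correction $\big(I - \tfrac{\nu_h \nu^T}{\nu_h\cdot\nu}\big)$; this is exactly the combination appearing inside $\tilde R_h$. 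Substituting into $\int_\Ga \nabla_\Ga v\cdot\nabla_\Ga w$, pulling back the domain of integration to $\Ga_h$ (which introduces the area element ratio $\mu_h = \d\sigma/\d\sigma_h$), and collecting all the geometric factors, one reads off that the resulting quadratic form on $\Ga_h$ has coefficient matrix $\mu_h\, P_h\, (\cdots)^T (I-d\mathcal{A})^{-1}(I-d\mathcal{A})^{-1}(\cdots) P_h$, which is precisely $\tilde R_h^{-\ell} P_h = \tilde A_h^{-\ell}$ after unlifting the geometric quantities; here one uses that $P_h$ acts as the identity on $\nabla_{\Ga_h} v^{-\ell}$ and is symmetric, so it can be inserted on both sides freely.

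The key steps, in order, are: (i) write down the closest-point projection and its Jacobian $DY = P(I - d\mathcal{A})P_h / (\nu_h\cdot\nu)$ (or the equivalent form), and derive the gradient transformation rule $\nabla_{\Ga_h}\phi^{-\ell} = (DY)^T \nabla_\Ga\phi$ composed appropriately with $y$; (ii) invert this to express $\nabla_\Ga\phi$ in terms of $\nabla_{\Ga_h}\phi^{-\ell}$, which requires inverting $P(I-d\mathcal{A})P_h$ on the relevant tangent spaces and produces the oblique correction terms $\big(I-\tfrac{\nu_h\nu^T}{\nu_h\cdot\nu}\big)$; (iii) perform the change of variables in the integral $\int_\Ga \nabla_\Ga v\cdot\nabla_\Ga w \to \int_{\Ga_h}(\cdots)\,\mu_h\,\d\sigma_h$; (iv) simplify the accumulated matrix to identify it with $\tilde A_h^{-\ell}$, using symmetry of $P_h$, $P$, and $\mu_h$-scaling, together with the fact that we may freely pre- and post-compose with $P_h$ since the integrands are already discrete-tangential; (v) note the whole computation is pointwise on each triangle and holds for $h \leq h_0$ so that $I - d\mathcal{A}$ is invertible and $\nu_h\cdot\nu$ is bounded away from zero (this is where smoothness of $\Ga$ and bounded curvature enter).

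The main obstacle I anticipate is bookkeeping rather than conceptual: correctly tracking the non-symmetric oblique projections $\big(I-\tfrac{\nu_h\nu^T}{\nu_h\cdot\nu}\big)$ and their transposes through the inversion, and verifying that the final matrix collapses exactly to the stated $\tilde R_h$ (note the two distinct denominators $\nu_h^n\cdot\nu$ versus $\nu_h\cdot\nu$ appearing in the definition of $\tilde R_h^n$ — one should check whether these coincide or whether one is a typo, since on a fixed admissible mesh $\nu_h^n = \nu_h$). A secondary subtlety is the direction of lifting: the identity as stated puts $\tilde A_h^{-\ell}$ (the unlift of $\tilde A_h$, a quantity naturally living on $\Ga$) against discrete gradients on $\Ga_h$, so one must be careful that the geometric factors $d$, $\mathcal{A}$, $\nu$ are all evaluated at the lifted point $y(x)$ while $\nu_h$, $P_h$ live on $\Ga_h$; the claim is precisely that assembling them in this mixed way gives an operator that, when unlifted, is $\tilde A_h^{-\ell}$. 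Finally one should confirm invertibility of $\tilde A_h$ itself (equivalently, that the quadratic form is non-degenerate on discrete-tangential vectors) so that the notation $\tilde A_h^{-\ell}$ — the unlift of the inverse — is meaningful; this follows from the norm equivalences \eqref{eq:norm_equiv} together with the fact that both sides of the identity are equivalent $H^1$-seminorms.
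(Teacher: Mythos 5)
Your proposal is correct and follows essentially the same route as the paper: the paper likewise starts from the gradient representation $\nabla_\Ga v = (I-d\mathcal{A})^{-1}\bigl(I-\tfrac{\nu_h\nu^T}{\nu_h\cdot\nu}\bigr)\nabla_{\Ga_h}v^{-\ell}$ (cited from Demlow--Dziuk rather than rederived), inserts $P_h$ using $P_h\nabla_{\Ga_h}=\nabla_{\Ga_h}$, and combines with $\int_\Ga v=\int_{\Ga_h}\mu_h v^{-\ell}$ to read off $\tilde A_h$. Your closing worry about ``the unlift of the inverse'' is moot, since $^{-\ell}$ here denotes only the unlift operator, not an inverse; your observation about the $\nu_h^n\cdot\nu$ versus $\nu_h\cdot\nu$ denominators is a fair catch of a typo.
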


\begin{proof}
The proof follows a similar approach as for the integral transformation in \cite[equation~(2.2.22)]{AdaptiveFEMBeltrami}. Use the representation of a tangential gradient by a discrete tangential gradient from \cite[equation~(2.2.19)]{AdaptiveFEMBeltrami}:
\begin{align}
\label{eq:grad_lifted_to_discrete}
	\nabla_\Ga v = (I - d \mathcal{A})^{-1} \ProjOneSided \nabla_{\Ga_h} v^{-\ell} := (I - d \mathcal{A})^{-1} \left(I - \frac{\nu_h \nu^T}{\nu_h \cdot \nu} \right) \nabla_{\Ga_h} v^{-\ell}  .
\end{align}
In this formula, rewrite the scalar product of the tangential gradients (note that $P_h \nabla_{\Ga_h} = \nabla_{\Ga_h}$), to obtain
\begin{align*}
	\nabla_\Ga v \cdot \nabla_\Ga w 
	&= P_h \ProjOneSided^T (I - d \mathcal{A})^{-1}  (I - d \mathcal{A})^{-1}  \ProjOneSided P_h \nabla_{\Ga_h} v^{-\ell}  \cdot \nabla_{\Ga_h} w^{-\ell}  \\
	&= \frac{1}{\mu_h} \tilde{A_h} \nabla_{\Ga_h} v^{-\ell} \cdot \nabla_{\Ga_h} w^{-\ell}.
\end{align*}
Combining this with the integral transformation $\int_\Ga v = \int_{\Ga_h} \mu_h v^{-\ell}$, see \cite[Lemma~5.1--5.2]{ESFEM2007}, finishes the proof.
\end{proof}

With this result, we see that the residual decomposition \eqref{eq:res_eq}--\eqref{eq:decom_subs} for \eqref{residual} indeed holds. 
The three terms of \eqref{residual} are, respectively, the first term of the oscillation \eqref{eq:osc}, the transformed term $\mu_h \partial_t \baruhtau$, and the term $\tilde{A}_h \nabla_{\Ga_h^n}  \baruhtau$ of the geometric residual \eqref{eq:res_geo}. 
The remaining terms vanish by rearranging and using the integral transformation.

To bound the geometric residual in Section~\ref{sec:geo_h1_res} we show an $L^\infty$-estimate for $P_h - \tilde A_h$.
\begin{lemma}
	\label{lem:trafo}
	Let $\Ga_h$ be an admissible triangulation for $h \leq h_0$.
	Then the following geometric estimate holds, with an $h$ independent constant $c > 0$,
	\begin{align*}
		\|P_h - \tilde{A}_h\|_{L^\infty (\Ga_h)} \leq c h^2 .
	\end{align*} 
\end{lemma}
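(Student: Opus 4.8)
The plan is to estimate $P_h-\tilde A_h$ by writing it as a product of factors that are each either exactly a geometric projection or an $O(h^2)$ perturbation of one, and then using the standard geometric estimates for the signed distance function and the closest point projection. Recall from the proof of Lemma~\ref{lemma:new integral trafo} that
\[
\tilde A_h = \mu_h\, P_h\, \ProjOneSided^T (I-d\mathcal{A})^{-1}(I-d\mathcal{A})^{-1}\ProjOneSided\, P_h ,
\qquad \ProjOneSided = I-\frac{\nu_h\nu^T}{\nu_h\cdot\nu},
\]
so that $\tilde A_h$ is built from the three "defect" factors $\mu_h$, $(I-d\mathcal{A})^{-1}$, and $\ProjOneSided$ (sandwiched by $P_h$). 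The first step is to collect the known pointwise smallness of each factor: on $\Ga_h$ one has $|d|\le c h^2$ (the nodes are interpolated onto $\Ga$), hence $\|(I-d\mathcal{A})^{-1}-I\|_{L^\infty(\Ga_h)}\le c h^2$ using boundedness of the Weingarten map $\mathcal{A}$; one has $\|\nu-\nu_h\|_{L^\infty(\Ga_h)}\le c h$, which gives $\|\ProjOneSided-P_h\|_{L^\infty}\le c h$ but, more importantly, $\|P_h\ProjOneSided P_h-P_h\|_{L^\infty}\le c h^2$ after the $P_h$-sandwiching cancels the first-order part (because $\nu_h\perp$ the tangent plane of $\Ga_h$, so $P_h\nu_h=0$ and the linear term in $\nu-\nu_h$ drops out); and $\|\mu_h-1\|_{L^\infty(\Ga_h)}\le c h^2$ from the standard surface-measure-quotient estimate (e.g.\ \cite[Lemma~5.1--5.2]{ESFEM2007}, \cite{Dziuk1988}).

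The second step is the algebraic bookkeeping. Insert these into $\tilde A_h$ and expand: replace $\mu_h$ by $1+(\mu_h-1)$, each $(I-d\mathcal{A})^{-1}$ by $I+R_1$ with $\|R_1\|_{L^\infty}\le ch^2$, and each $\ProjOneSided$ by $P_h+(\ProjOneSided-P_h)$. Multiplying out, the leading term is $P_h\cdot I\cdot I\cdot I\cdot P_h\cdot I\cdot I\cdot P_h=P_h$ (using $P_h^2=P_h$). Every remaining term contains at least one defect factor. Terms carrying a factor $\mu_h-1$ or a factor $R_1$ are immediately $O(h^2)$ since the surrounding operators are uniformly bounded. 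The only terms that are a priori merely $O(h)$ are those linear in $(\ProjOneSided-P_h)$; but there one uses $P_h(\ProjOneSided-P_h)P_h = P_h\ProjOneSided P_h - P_h = O(h^2)$ from the first step (after commuting the harmless $I+R_1$ factors through at the cost of another $O(h^2)$), and the terms quadratic in $(\ProjOneSided-P_h)$ are $O(h^2)$ trivially. Summing, $\|P_h-\tilde A_h\|_{L^\infty(\Ga_h)}\le c h^2$.

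The main obstacle is the second-order cancellation in the $\ProjOneSided$-factors: a crude triangle-inequality bound only yields $O(h)$, so one must genuinely exploit that $P_h$ annihilates $\nu_h$ and that the $O(h)$ part of $\nu-\nu_h$ is tangential-to-$\Ga_h$-orthogonal in the relevant sense, i.e.\ $(\nu-\nu_h)\cdot\nu_h = O(h^2)$. This is the surface-FEM analogue of the classical geometric consistency estimate; it is exactly the mechanism behind the $O(h^2)$ geometric error in \cite{Dziuk1988,ESFEM2007,AdaptiveFEMBeltrami}, and the argument is a careful re-grouping rather than a new idea, so I would present it by isolating the scalar identity for $P_h\ProjOneSided P_h-P_h$ first and then feeding it into the expansion above. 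One should also note that all constants here depend only on $\Ga$ (through $\mathcal{A}$ and the tubular-neighbourhood width) and on the quasi-uniformity constant $\varrho$, consistent with the hypotheses in Theorem~\ref{thm:upper_lower}.
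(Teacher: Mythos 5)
Your proposal is correct and follows essentially the same route as the paper: expand $\tilde A_h=\mu_h P_h\ProjOneSided^T(I-d\mathcal{A})^{-2}\ProjOneSided P_h$, absorb the $\mu_h-1$ and $(I-d\mathcal{A})^{-1}-I$ factors as $\mathcal{O}(h^2)$ via $\|d\|_{L^\infty(\Ga_h)}\le ch^2$, and exploit the second-order cancellation in the sandwiched projections, which the paper realises by identifying $P_h\ProjOneSided^T\ProjOneSided P_h-P_h$ with a rank-one matrix built from $P_h\nu=P_h(\nu-\nu_h)=\mathcal{O}(h)$ (citing the arguments of Lemma~4.1 in \cite{Dziuk2013FiniteEM}). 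The only slip is notational: the quantity needing the cancellation is $P_h\ProjOneSided^T\ProjOneSided P_h-P_h$ rather than $P_h\ProjOneSided P_h-P_h$ (the latter vanishes identically since $P_h\nu_h=0$), but the mechanism you describe is exactly the right one.
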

\begin{proof}
	The surface has a bounded second fundamental form $\mathcal{A}$, and $\Ga_h$ satisfies $\|d\|_{L^\infty(\Ga_h)} \leq c h^2$, therefore, for $h \leq h_0$, we have $(I-d \mathcal{A})^{-1} = I + \mathcal{O}(h^2)$, via its Neumann series.  Upon recalling $\tilde{A}_h = \tilde{R}_h P_h$ (see \eqref{eq:discrete tangential gradient} and \eqref{eq:projection operator R_h}), we obtain
	\begin{align*}
		P_h - \tilde{A}_h &= P_h-\tilde{R}_h P_h = P_h-P_h \ProjOneSided^T \ProjOneSided P_h + \mathcal{O}(h^2) \\
		&= -(\nu- (\nu \cdot \nu_h) \nu_h)(\nu- (\nu \cdot \nu_h) \nu_h)^T +\mathcal{O} (h^2) = \mathcal{O} (h^2) ,
	\end{align*}
	where the final step is shown using the arguments of the proof of Lemma~4.1 in \cite{Dziuk2013FiniteEM}. 
\end{proof}

\subsection{Reducing the geometric residual}
\label{sec:geo_h1_res}

 We start by relating the geometric residual \eqref{eq:res_geo}, to the indicators \eqref{eq:indicator}.  
 
We use the classical geometric approximation results (all requiring $\Ga_h^n$ to be an admissible triangulation for $h \leq h_0$) for all $0 < t^n \leq T_{\max}$, $\|1-\mu_h^n\|_{L^\infty(\Ga_h^n)} \leq c h^2$ and $\|d\|_{L^\infty (\Ga_h^n)} \leq c h^2$, originally shown in \cite[Section~5]{Dziuk1988}, and Lemma~\ref{lem:trafo} from above.
\begin{lemma}
\label{lemma:geo}
	Let $\Ga_h^n$ be an admissible triangulation for $h \leq h_0$ for all $n$ such that $0 < t^n \leq T_{\max}$.
	Then there is an $h$- and $n$-uniform constant $c > 0$, depending only on $\Ga$, such that	
	\addtocounter{equation}{+1}
	\begin{equation} \tag{\theequation a}
		\label{eq:geo_res_bound}
		 \|\mathcal{R}_{\textnormal{g}}(t)\|_{{H^{-1} (\Ga)}} \leq c\Big(\mathcal{G}^n + h \big(\eta_h^n + \eta_\tau^n \big)\Big) \qquad \text{for all $t \in [t^{n-1},t^{n}]$}.
	\end{equation}
\end{lemma}
\begin{proof}
 By the definition of the geometric residual \eqref{eq:res_geo}, together with the estimates from \cite[Lemma~5.1]{ESFEM2007} and Lemma~\ref{lem:trafo}, for any $t \in [0,T_{\max}]$ belonging to $(t^{n-1}, t^n]$, we obtain
\begin{align*}
	\|\mathcal{R}_{\textnormal{g}}(t)\|_{H^{-1} (\Ga)} 
	&= \sup_{\substack{v \in H^1(\Ga) \\ v \neq 0}} \frac{|\langle \mathcal{R}_{\textnormal{g}}(t), v\rangle|}{\|v\|_{H^1(\Ga)}} \\
	&\hspace{-8mm}\leq ch^2 \Big(\|\partial_t   \baruhtau- f_h^n \|_{L^2(\Ga_h^n)} +\|\nabla_{\Ga_h^n}(\baruhtau-u_h^n)\|_{L^2(\Ga_h^n)} +\| \nabla_{\Ga_h^n} u_h^n \|_{L^2(\Ga_h^n)}   \Big) .
\end{align*}
Comparing to the error indicators \eqref{eq:indicator} we obtain the estimates:
\begin{align*}
\|\mathcal{R}_{\textnormal{g}}(t)\|_{H^{-1} (\Ga)}^2 \leq ch^2((\eta_h^n)^2 + h^2 (\eta_\tau^n)^2) + (\mathcal{G}^n)^2 &\overset{h \leq h_0}{\leq} ch^2 ((\eta_h^n)^2 +(\eta_\tau^n)^2) +(\mathcal{G}^n)^2 ,
\end{align*}
where we have estimated elementwise.
\end{proof} 

\subsection{Coarsening indicator on two distinct meshes}
\label{sec:res_coarse}

The coarsening residual is more complex than in the flat case. We derive an indicator which only contains discrete quantities, since evaluation of lifted functions is computationally too expensive. 
The coarsening residual measures the error occurring from lifting $u_h^{n-1}$ on two different meshes $(\Ga_h^{n-1})^{\ell^{n-1}}$ and on $(\Ga_h^{n})^{\ell^{n}}$ via the refinement interpolant:
\begin{equation*}
	(u_h^{n-1})^{\ell^{n-1}} - (\IntRef u^{n-1}_h)^{\ell^{n}}
\end{equation*} 

Recall the notions of the common triangulation and interpolation from Section~\ref{sec:common_tri}.

The benefit in using the common triangulation is that the difference of $(u_h^{n-1})^{\ell^{n-1}}$ and $(\IntRef u_h^{n-1})^{\ell^{n}}$ can be represented on $(\Ga_h^{\comesh})^\ell$ and then unlifted to the discrete common triangulation $\Ga_h^{\comesh}$. We will see that interpolation of a function on the common triangulation only introduces errors of higher order. We emphasize that these high-order terms are purely related to the geometry $\Ga$, and would be zero in the flat case.

We show the following upper bound.
\begin{lemma}
	 Let $\Ga$ be a surface such that its principal curvatures and their derivatives are bounded.  Let $\Ga_h^n$ be an admissible triangulation, obtained using NVB refinement, for $h \leq h_0$ for all $n$ such that $0 < t^n \leq T_{\max}$.
	Then there is an $h$- and $n$-uniform constant $c > 0$, depending only on $\Ga$, such that	
	\addtocounter{equation}{+1}
	\begin{equation} \tag{\theequation a}
	\label{eq:coarse_bound_new}
		\|\Rcoarse(t)\|_{H^{-1} (\Ga)} \leq c \, \eta_c^n .
	\end{equation}
\end{lemma}

\begin{proof}
The term of interest arises from the Cauchy--Schwarz inequality of \eqref{eq:res_coarse}:
\begin{equation}
	\label{eq:ind_coarse_not_simple}
	 \frac{1}{\tau_n} \big\| (\IntRef u^{n-1}_h)^{\ell^{n}}- (u^{n-1}_h)^{\ell^{n-1}} \big\|_{L^2(\Ga)} + \Big(\frac{t^n-t}{\tau^n} \Big)  \Big\| \nabla_\Ga \Big((u^{n-1}_h)^{\ell^{n-1}} -(\IntRef u^{n-1}_h)^{\ell^{n}} \Big)  \Big\|_{L^2(\Ga)}.
\end{equation}

When estimating the term $\|  (\IntRef u^{n-1}_h)^{\ell^{n}}- (u^{n-1}_h)^{\ell^{n-1}}  \|$ we face two main issues: the two functions are defined on different finite element nodes, and involve two distinct lifts. 

We alleviate these issues by using the common interpolation operators $\mathcal{I}_{n-1}^{\comesh}$, $\mathcal{I}_{n}^{\comesh}$, and the triangle inequality, which yield: 
\begin{align*}
\| (\IntRef u^{n-1}_h)^{\ell^{n}}- (u^{n-1}_h)^{\ell^{n-1}} \| \leq &\ \|(\mathcal{I}_n^{\comesh} \IntRef u^{n-1}_h)^{\ell^{\comesh}} - (\mathcal{I}_{n-1}^{\comesh} u^{n-1}_h)^{\ell^{\comesh}} \| \\
&\ + \| (\mathcal{I}_{n}^{\comesh} \IntRef u^{n-1}_h)^{\ell^{\comesh}} - (\IntRef u^{n-1}_h)^{\ell^{n}} \| \\
&\ + \| (\mathcal{I}_{n-1}^{\comesh} u^{n-1}_h)^{\ell^{\comesh}} - (u^{n-1}_h)^{\ell^{n-1}} \|.
\end{align*}

The first term here is directly appearing in the coarsening indicator \eqref{eq:indicator - coarse}, up to a norm equivalence in $S_h^{\comesh}$.

The second and third terms deal with interpolation errors which still involve two distinct discrete domains and lifts, but the underlying finite element functions have identical values in corresponding points, see Figure~\ref{fig:nodal_values_of_common_triangulation}. Next we deal with the issue of different discrete domains. 

We will present our argument for the third term, setting $w_h^0 = u_h^{n-1}$. The estimate for the second term is analogous, with $w_h^0 = \IntRef u^{n-1}_h$ in place of $u^{n-1}_h$.
We will first assume that at a time step an element is only refined once, using newest-vertex bisection, and comment on multiple refinements thereafter.

\subparagraph{Affine transformation}
Using the common triangulation $\Ga_h^{\comesh}$ (see Section~\ref{sec:common_tri}, and Figure~\ref{fig:nodal_values_of_common_triangulation}) as a basis, we use a $\theta$-argument very similar to the one developed in Section~4 of \cite{KLLP_2017}. 

To this end we introduce a continuous affine map which transforms a function from the discrete mesh $\Ga_h^0 := \widehat{\Ga}_h^{n-1}$ (with unlifted refined nodes $(x_j^0)$, see Figure~\ref{fig:nodal_values_of_common_triangulation} top row), to its common interpolated version  $\Ga_h^1 := \Ga_h^{\comesh}$ with nodes satisfying $x_j^1 = (x_j^0)^\ell$ by construction of the common triangulation (see Figure~\ref{fig:nodal_values_of_common_triangulation} bottom row). 
Recall that $x_j^1$ is the unique solution of the lift \eqref{eq:closest point projection}. 
The affine transformation is based on intermediate nodal values $x_j^\theta := x_j^0 + \theta (x_j^1 - x_j^0)$, for all $j$, which define a $\theta$-dependent discrete triangulation $\Ga_h^\theta$ and finite element spaces $S_h^\theta$.

On the intermediate surfaces, given a nodal vector $(w_i)$ we set $w_h^\theta \colon S_h^0 \times [0,1] \rightarrow S_h^\theta$, $w_h^\theta = \sum_i w_i \phi_i^\theta$ with fixed nodal values $w_i$ and the nodal basis functions $\phi_i ^\theta \in S_h^\theta$. Observe that elementwise $\phi^\theta (F(x,\theta)) = \phi^0(x)$, where $F \colon T^0 \times \theta \rightarrow T^\theta$ is an affine transformation (as the reference map in standard finite element analysis) which is linearly dependent on $\theta$. 
Further we introduce the $\theta$-dependent lift $\ell^\theta$, mapping $\Ga_h^\theta \to \Ga$. 
This construction is sketched in Figure~\ref{fig:theta construction}. 

Then for any point $a$ on the curved domain $\Ga$ we rewrite the difference of two lifted functions as follows (cf.~proof of Lemma~4.1 in \cite{KLLP_2017}):
\begin{equation}
	\begin{aligned}
		(w_h^1)^{\ell^1}(a)-(w_h^0)^{\ell^0}(a) 
		= &\ \int_0^1 \frac{\d}{\d \theta}(w_h^\theta)^{\ell^\theta}(a) \d \theta 
		= \int_0^1 \frac{\d}{\d \theta} \Big(w_h^\theta (x^\theta(a)) \Big) \d \theta \\
 = &\  \int_0^1 (\nabla_{\Ga_h^\theta} w_h^\theta)(x^\theta(a)) \cdot \partial_\theta x^\theta(a) + \partial_\theta w_h^\theta(x)|_{x =x^\theta(a)} \d \theta  \\
		= &\  \int_0^1 \big(\nabla_{\Ga_h^\theta} w_h^\theta\big)^{\ell^\theta}\cdot \big(\partial_\theta x^\theta(a) - A^\prime(\theta) A^{-1}(\theta)(x^\theta(a)-b) \big) \d \theta. \\
	\end{aligned}
\end{equation} 
We employ a transport property argument to replace the $\theta$-derivative of $w_h^\theta$. Since the partial derivative assumes a fixed $x \in T$, by the standard reference element map we have a fixed reference point $\hat{x}$ in the reference simplex $\hat{T}$. Movement under the $\theta$-dependent map $F(\hat{x},\theta)$ fulfils the transport property \cite[Proposition~5.4]{ESFEM2007}, i.e.\
\begin{align*}
0 = \frac{\d}{\d \theta} w_h^\theta(F(\hat{x},\theta)) = \big[D_x w_h^\theta(x) \cdot \partial_\theta F(\hat{x},\theta) + \partial_\theta w_h^\theta(x)\Big]_{x = F(\hat{x},\theta)}.
\end{align*}
By $F(x,\theta) = A(\theta) x +b$ and direct calculations we get
\begin{align*}
\partial_\theta F(\hat{x},\theta) = A^\prime(\theta) \hat{x}.
\end{align*}
Now rearrange and plugging in $\hat{x} = F^{-1}(x^\theta(a),\theta)$ gives:
\begin{align*}
\partial_\theta w_h^\theta(x)|_{x = x^\theta(a)} = -D_x w_h^\theta(x^\theta(a)) \cdot A^\prime(\theta) F^{-1}(x^\theta(a),\theta).
\end{align*}
By construction we note that
\begin{equation}
\label{eq:Ascaling}
|A'(\theta)|
= \mathcal{O}\!\left(
\begin{bmatrix}
| & | & |\\
h^2 & h^2 & h\\
| & | & |
\end{bmatrix}
\right),
\quad
|A^{-1}(\theta)|
= \mathcal{O}\!\left(
\begin{bmatrix}
- & 1/h & -\\
- & 1/h & -\\
- & 1 & -
\end{bmatrix}
\right),
\quad
x^\theta-b=
\begin{bmatrix}
h\\
h\\
h\\
\end{bmatrix}.
\end{equation}
Thus bound the contributions from the mapping $F$ as 
\begin{equation}
\label{eq:bound_partial_theta_F}
\partial_\theta F(F^{-1}(x^\theta(a),\theta),\theta) = A^\prime(\theta) A^{-1}(\theta) (x^\theta(a)-b) = \mathcal{O}(h^2).
\end{equation}
The discrete gradient can be unlifted via norm equivalence arguments \eqref{eq:norm_equiv}.

We determine bounds for $\frac{\d}{\d  \theta} x^\theta(a)$ by deriving an explicit formula for $x^\theta(a)$. We reduce the analysis, assuming that we use newest-vertex bisection as the refinement-strategy. This implies that each refined element has at most one node, which has to be lifted. 
Let $A,B$ be fixed nodes, and $C^\theta$ the new node, define the vectors:
\begin{equation*}
	\begin{alignedat}{3}
		v_a \;=\; a - x^0(a),
		&\quad& \text{the lift vector, along which the points $x^\theta(a)$ are lifted onto $\Ga$,}
		\\[1.5ex]
		\begin{rcases}
			v_{AB} &= B - A,\\
			v_{AC^\theta} &= C^\theta - A,
		\end{rcases}
		&\quad& \text{the element vectors, which span the $\theta$-element $\Delta ABC^\theta$.}
	\end{alignedat}
\end{equation*}

Using these vectors, we parametrize the fixed projection line $\overline{x^0(a)\, a}$ and the moving element $\Delta ABC^\theta$. This allows us to determine the intersection $x^\theta(a)$ by solving \begin{equation*}
	x^0(a) + \lambda(\theta) v_a = A + \mu(\theta) v_{AB} + \xi(\theta) v_{AC^\theta}.
\end{equation*} 
The unique solution of this linear system is given by Cramer's rule 
\begin{equation*}
	\lambda(\theta) = \frac{\det (A-x_0(a), v_{AB}, v_{AC^\theta})}{\det (v_a, v_{AB}, v_{AC^\theta})}.
\end{equation*} 
This gives an explicit formula for the intersection point $x^\theta (a) = x^0(a) + \lambda(\theta) v_a$.

\begin{figure}[htbp]
	\begin{tikzpicture}[scale=2]
		\draw[thick, smooth, samples=200, domain=0:90]
		plot({2*cos(\x)}, {sin(\x)});
		\coordinate (top) at (0.5,1);
		\node[above] at (top) {$\Ga$};
		\draw[black] (0,0) node[left]{$C^0$} -- (2,0);
		\fill (0,0)    circle[radius=0.025];
		
		\draw[black] (0,1) node[left]{$C^1$} -- (2,0) node[right]{$A^0 = A^1$};
		\fill (2,0)    circle[radius=0.025];
		\fill (0,1)    circle[radius=0.025];
		
		\draw[black, dashed] (0,0.5) node[left]{$C^\theta$} -- (2,0);
		\fill (0,0.5)    circle[radius=0.025];
		
		\foreach \t in {15,30,45,67,80,90} {
			\pgfmathsetmacro{\xi}{2*cos(\t)}
			\pgfmathsetmacro{\yi}{sin(\t)}
			\pgfmathsetmacro{\tx}{-2*sin(\t)}
			\pgfmathsetmacro{\ty}{ cos(\t)}
			\pgfmathsetmacro{\nx}{\ty}
			\pgfmathsetmacro{\ny}{-\tx}
			\pgfmathsetmacro{\szero}{-\yi/\ny}
			\pgfmathsetmacro{\xzero}{\xi + \szero*\nx}
			\draw[gray] (\xi,\yi) -- (\xzero, 0);
			\ifnum\t=45
			\node[above] at (\xi,\yi) {$a$};
			\node[below] at (\xzero,0) {$x^0(a)$};
			\pgfmathsetmacro{\sone}{(2 - \xi - 2*\yi)/(\nx + 2*\ny)}
			\pgfmathsetmacro{\uone}{(\xi + \sone*\nx)/2}
			\pgfmathsetmacro{\xone}{2*\uone}
			\pgfmathsetmacro{\yone}{1 - \uone}
			\node at (\xone+0.3,\yone+0.075) {$x^1(a)$};
			\fill (\xi,\yi)    circle[radius=0.025];
			\fill (\xzero,0)   circle[radius=0.025];
			\fill (\xone,\yone) circle[radius=0.025];

			\node[gray] at (0.45*\xzero+0.55*\xone-0.3,0.55*\yone-0.075) {$x^\theta(a)$};
			\fill[gray] (0.45*\xzero+0.55*\xone,0.55*\yone)   circle[radius=0.025];
			\fi
		}
	\end{tikzpicture}
	~
	\begin{tikzpicture}[scale=2]
		\draw[thick, smooth, samples=200, domain=0:90]
		plot({2*cos(\x)}, {sin(\x)});
		\coordinate (top) at (0.5,1);
		\node[above] at (top) {$\Ga$};
		\draw[black] (0,0) node[left]{$C^0$} -- (2,0);
		\fill (0,0)    circle[radius=0.025];
		
		\draw[black] (0,1) node[left]{$C^1$} -- (2,0) node[right]{$A^0 = A^1$};
		\fill (2,0)    circle[radius=0.025];
		\fill (0,1)    circle[radius=0.025];
		
		\draw[black, dashed] (0,0.5) node[left]{$C^\theta$} -- (2,0);
		\fill (0,0.5)    circle[radius=0.025];
		
		\draw [thick, <-] (0,0.5) -- (2, 0) node[pos=0.45, above]{$v_{AC^\theta}$};
		
		\foreach \t in {15,30,45,67,80,90} {
			\pgfmathsetmacro{\xi}{2*cos(\t)}
			\pgfmathsetmacro{\yi}{sin(\t)}
			\pgfmathsetmacro{\tx}{-2*sin(\t)}
			\pgfmathsetmacro{\ty}{ cos(\t)}
			\pgfmathsetmacro{\nx}{\ty}
			\pgfmathsetmacro{\ny}{-\tx}
			\pgfmathsetmacro{\szero}{-\yi/\ny}
			\pgfmathsetmacro{\xzero}{\xi + \szero*\nx}
			\draw[gray] (\xi,\yi) -- (\xzero, 0);
			\ifnum\t=45
			\node[above] at (\xi,\yi) {$a$};
			\node[below] at (\xzero,0) {$x^0(a)$};
			\pgfmathsetmacro{\sone}{(2 - \xi - 2*\yi)/(\nx + 2*\ny)}
			\pgfmathsetmacro{\uone}{(\xi + \sone*\nx)/2}
			\pgfmathsetmacro{\xone}{2*\uone}
			\pgfmathsetmacro{\yone}{1 - \uone}
			\fill (\xi,\yi)    circle[radius=0.025];
			\fill (\xzero,0)   circle[radius=0.025];

			\draw [thick, <-] (\xi,\yi) -- (\xzero, 0) node[pos=0.65, right]{$v_a$};
			\fi			
		}
	\end{tikzpicture}
	\caption{Sketch of the two-dimensional situation. With the element $\overline{AC^\theta}$ of $\Ga_h^\theta$ for $\theta \in [0,1]$, the point $x^\theta(a)$, and the vectors $v_a$ and $v_{AC^\theta}$.}
	\label{fig:theta construction}
\end{figure}

In order to compute the derivative $\frac{\d}{\d \theta} x^\theta(a) = \frac{\d}{\d \theta} \lambda(\theta) v_a$, we need to determine the derivative of $\lambda(\theta)$. 
Let $N(\theta) := \det\bigl(A-x_0(a),v_{AB},v_{AC^\theta}\bigr)$ and $D(\theta) := \det\bigl(v_a,v_{AB},v_{AC^\theta}\bigr)$.
Then, since only one of the vectors is $\theta$-dependent, we obtain
\begin{align*}
	\frac{\d}{\d \theta} N(\theta)
	= \det \bigg( A - x_0(a), v_{AB}, \frac{\d}{\d \theta} v_{AC^\theta} \bigg),
	\qquad
	\frac{\d}{\d \theta} D(\theta)
	= \det \bigg( v_a, v_{AB}, \frac{\d}{\d \theta} v_{AC^\theta} \bigg).
\end{align*}
By the element size and the bound $\|d\|_{L^\infty} \leq ch^2$, we know that $A-x^0(a) = \mathcal{O}(h)$,  $v_a = \mathcal{O}(h^2)$ and $v_{AB} = \mathcal{O}(h) = v_{AC^\theta}$, and $\frac{\d}{\d \theta} v_{AC^\theta} = \frac{\d}{\d \theta} C^\theta = C^1 -C^0 =\mathcal{O}(h^2)$.
Therefore we have
\begin{align*}
	N(\theta) =  \mathcal{O}(h^3), \quad \frac{\d}{\d \theta} N(\theta) = \mathcal{O}(h^4) , \quad \text{and} \quad
	D(\theta) = \mathcal{O}(h^4), \quad \frac{\d}{\d \theta} D(\theta) = \mathcal{O}(h^5) , 
\end{align*}
In particular, note that $\nu(a)$ is collinear with $v_a$, and $v_{AB} \times v_{AC^\theta}$ is collinear to $\nu_h(x(a))$, hence, using that $\|\nu-\nu_h\|_{L^\infty} = \mathcal{O}(h)$, the angle between $v_a$ and $v_{AB} \times v_{AC^\theta}$ is small. Therefore, the denominator $D(\theta)$ is in fact of forth order $D(\theta) \sim \mathcal{O}(h^4)$. 

We then have
\begin{align*}
\frac{\d}{\d \theta} \lambda(\theta)  = \frac{\frac{\d}{\d \theta} N(\theta)D(\theta)-N(\theta)\frac{\d}{\d \theta} D(\theta)}{D(\theta)^2} = \mathcal{O}(1) .
\end{align*}

We therefore have shown the bound
\begin{equation*}
	\frac{\d}{\d \theta} x^\theta = \mathcal{O}(h^2) .
\end{equation*}

Altogether, we obtain the estimate
\begin{align*}
	&\ \| (\mathcal{I}_{n-1}^{\comesh} u^{n-1}_h)^{\ell^{\comesh}} - (u^{n-1}_h)^{\ell^{n-1}} \| = \|(w_h^1)^{\ell^1}(a)-(w_h^0)^{\ell^0}(a)  \|  \\
	&\ \leq \int_0^1 \Big\| \big(\nabla_{\Ga_h^\theta} w_h^\theta \big)^{\ell^\theta} \Big\| \Big\|\partial_\theta x^\theta(a)-A^\prime(\theta) F^{-1}(x^\theta(a),\theta) \Big\| \d \theta \\
	&\ \leq  C h^2 \int_0^1 \big\| \nabla_{\Ga_h^\theta} w_h^\theta \big\| \d \theta 
	\leq C h^2 \|\nabla_{\Ga_h^0} w_h^0 \|_{L^2(\Ga_h^0)} =  C h^2 \|\nabla_{\Ga_h^{n-1}} u_h^{n-1} \|_{L^2(\Ga_h^{n-1})} ,
\end{align*}
where the final inequality follows by the $\theta$-uniform norm equivalence \cite[Lemma~4.3]{KLLP_2017}.

The second term of \eqref{eq:ind_coarse_not_simple} is handled analogously. First note that the time dependent $\frac{t^n-t}{\tau^n} \leq 1$, so we ignore this factor. As before we insert zeros of the form $\pm \nabla_\Ga [(\mathcal{I}_n^{\comesh} \IntRef u^{n-1}_h)^{\ell^{\comesh}}] \pm \nabla_\Ga[(\mathcal{I}_n^{\comesh} u^{n-1}_h)^{\ell^{\comesh}}]$, upon the triangle inequality we obtain the term from the coarsening indicator  \eqref{eq:indicator - coarse}.

We again apply the fundamental theorem of calculus and take the gradient afterwards. We simplify directly by applying the product rule and using that fact that $\nabla_{\Ga_h^\theta} w_h^\theta$ is elementwise constant, thus the tangential gradient of its lift vanishes. We remark that the surface gradient $\nabla_\Ga$ always differentiates with respect to $a$. This gives
\begin{align*}
\nabla_\Ga (w_h^1)^{\ell^1}\!(a)\!-\nabla_\Ga (w_h^0)^{\ell^0}\!(a)\! &= \int_0^1 \nabla_\Ga \Big( \frac{\d}{\d  \theta} (w_h^\theta)^{\ell^\theta}\!(a)\! \Big) \d \theta \\
&= \int_0^1 \big(\nabla_{\Ga_h^\theta} w_h^\theta\big)^{\ell^\theta} \nabla_\Ga \bigg(\frac{\d}{\d  \theta} x^\theta\!(a)\! - A^\prime(\theta) A^{-1}(\theta)(x^\theta\!(a)\!-b) \bigg)
\end{align*}
We employ previous results to simplify the tangential gradients. The contributions from the standard finite element map are bound by 
\begin{equation}
\nabla_\Ga \big(A^\prime(\theta) A^{-1}(\theta)(x^\theta(a)-b)\big) = A^\prime(\theta) A^{-1}(\theta) \nabla_\Ga x^\theta(a) = \mathcal{O}(h).
\end{equation}
where we used previous results \eqref{eq:Ascaling} on $A(\theta)$. Further from the elementwise $\|D_a (a-x(a))\|_{L^\infty} \leq ch$ \cite{elliot_ranner_13} on the corresponding curved element, 
the explicit description of $x^\theta(a) = a-\lambda(\theta) v_a$, and the trivial bounds $\lambda(\theta) \leq 1$ and $\|P\| = 1$ we are able to bound the tangential jacobian of $x^\theta(a)$
\begin{align*}
\|\nabla_\Ga x^\theta\!(a)\!\|_{L^\infty}\! = \|P-\!\lambda(\theta)\! \nabla_\Ga (a-x^0\!(a)\!) \|_{L^\infty}\! \leq \|P\|_{L^\infty}+\|\nabla_\Ga (a-x^0\!(a)\!) \|_{L^\infty}\!  \leq 1+ch,
\end{align*}
which concludes the estimate.
Finally we analyse the contributions of the lift map directly, again using its explicit description
\begin{equation}
\bigg\|\nabla_\Ga \Big(\frac{\d}{\d  \theta} x^\theta(a)\Big)\bigg\|_{L^\infty} = \bigg\|\frac{d}{d \theta}\lambda(\theta) \nabla_\Ga (a-x^0(a))\bigg\|_{L^\infty} \leq ch.  
\end{equation}
In total this shows \eqref{eq:indicator - coarse}. 

\subparagraph{Multiple refinements}
By restricting the analysis to one-level refinements only, we have the convenient property that new nodes are unlifted to the midpoint of discrete elements, cf.~Figure~\ref{fig:nodal_values_of_common_triangulation}. However, in the general setting, multiple consecutive refinements of an element may be required.

We resolve such multi-level refinements as follows:
Introduce intermediate meshes, where each intermediate mesh is constructed by taking the nodes of the common triangulation $\widehat{\Ga}_h^{n-1}$ and following the refinement hierarchy:
\begin{itemize}
	\item[--] lift the first level of refinements to obtain the first intermediate mesh,
	\item[--] lift the second level of refinements to obtain the second intermediate mesh,
	\item[--] etc. 
\end{itemize}
This construction allows us to use precisely the arguments from above between two consecutive intermediate meshes. 
Namely, the interpolation errors can be bound via multiple triangle inequalities by inserting the representation of $u_h^{n-1}$ and $\IntRef u_h^{n-1}$ on the intermediate meshes, combined with the $\theta$-uniformity of the resulting discrete gradients which shows the full bound.

The step-by-step refinement/coarsening and lifting, further allows for a unique common mesh construction, in particular the parent mesh cannot deteriorate by construction. This allows us to directly write the main contribution to the coarsening indicator.

In total we have the upper bound to the coarsening residual \eqref{eq:res_coarse} and thus shown that \eqref{eq:coarse_bound_new} indeed holds. 
\phantom{$\square$}
\end{proof}

\subsection{Bounding the spatial residual}
\label{section:spatial residual bound}

The decomposition \eqref{eq:res_eq} was structured such that the spatial and temporal indicators are solely defined on the discrete surface which is elementwise flat. This is an essential feature as it allows to modify results from the case of parabolic problems in flat domains by Verführt \cite{Verfuehrt1996,Verfuerth2003}. All modifications result from correctly resolving the underlying geometry of the surface PDE \eqref{eq:heat_strong}.
\begin{proposition}
\label{prop:spatial_res}
	For $0 < t^n \leq T_{\max}$ the spatial indicator $\eta_h^n$ \eqref{eq:indicator - spatial} is uniformly equivalent to the dual norm of the spatial residual $\mathcal{R}_h$ \eqref{eq:res_space}, i.e., for $t \in (t^{n-1},t^n]$,
	\begin{subequations}
	\begin{align}
		\label{eq:indicator upper bound for R_h}
		\| \mathcal{R}_h(t) \|_{{H^{-1} (\Ga)}} \leq &\ c \, \eta_h^n , \\
		\label{eq:indicator lower bound for R_h}
		\frac{1}{c} \eta_h^n \leq &\ \| \mathcal{R}_h(t) \|_{{H^{-1} (\Ga)}} .
	\end{align}
	\end{subequations}
	The constant $c > 0$ is independent of $h$ and $\tau^n$, but depends on the shape-regularity constant $\varrho^n$ of $\Ga_h^n$.
\end{proposition}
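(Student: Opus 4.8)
The plan is to prove the two inequalities \eqref{eq:indicator upper bound for R_h} and \eqref{eq:indicator lower bound for R_h} separately, exploiting that the spatial residual \eqref{eq:res_space} is, by construction, expressed entirely through integrals over the elementwise flat surface $\Ga_h^n$. Once the lift between $\Ga$ and $\Ga_h^n$ is accounted for by the norm equivalences \eqref{eq:norm_equiv}, the classical residual machinery of Verf\"urth \cite{Verfuehrt1996,Verfuerth2003} for parabolic problems on flat domains can be transcribed; the only genuinely new ingredient is the passage through the lift, which couples $H^{-1}(\Ga)$ to estimates performed on $\Ga_h^n$.

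For the upper bound, let $v \in H^1(\Ga)$ and write $v^{-\ell} \in H^1(\Ga_h^n)$ for its unlift. I would pick a Scott--Zhang (or Cl\'ement) type quasi-interpolation $I_h \colon H^1(\Ga_h^n) \to S_h^n$ on the flat triangulation $\mathcal{T}_h^n$, with the usual local estimates $\|w - I_h w\|_{L^2(T)} \le c\, h_T \|w\|_{H^1(\omega_T)}$ and $\|w - I_h w\|_{L^2(S)} \le c\, h_S^{1/2} \|w\|_{H^1(\omega_S)}$ over element and edge patches $\omega_T$, $\omega_S$ whose overlap is bounded in terms of $\varrho^n$. Since $\langle \mathcal{R}_h, v\rangle$ depends on $v$ only through $v^{-\ell}$ and vanishes whenever $v^{-\ell} \in S_h^n$ by the fully discrete scheme \eqref{discreteheat}, Galerkin orthogonality gives $\langle \mathcal{R}_h, v\rangle = \langle \mathcal{R}_h, v - (I_h v^{-\ell})^\ell\rangle$, i.e.\ the same expression evaluated at $v^{-\ell} - I_h v^{-\ell}$. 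I would then integrate the gradient term by parts element by element on $\Ga_h^n$: since every $T \in \mathcal{T}_h^n$ is flat and $u_h^n|_T$ is affine, the element Laplacian vanishes and only the edge jumps $\llbracket \nabla_T u_h^n \cdot \mathrm{n}_S\rrbracket$ survive. A Cauchy--Schwarz inequality, the local interpolation estimates, and the finite overlap of the patches yield $|\langle \mathcal{R}_h, v\rangle| \le c\, \eta_h^n \|v^{-\ell}\|_{H^1(\Ga_h^n)} \le c\, \eta_h^n \|v\|_{H^1(\Ga)}$; dividing by $\|v\|_{H^1(\Ga)}$ and taking the supremum gives \eqref{eq:indicator upper bound for R_h}.

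For the lower bound I would use the standard bubble-function technique on $\Ga_h^n$. For each $T \in \mathcal{T}_h^n$, testing $\mathcal{R}_h$ with the lift of $\psi_T\big(f_h^n - \tfrac{1}{\tau^n}(u_h^n - \IntRef u_h^{n-1})\big)$, where $\psi_T$ is the interior bubble, and using inverse estimates together with the equivalence of norms on the finite-dimensional local polynomial space and the norm equivalence \eqref{eq:norm_equiv} of the lift, bounds $h_T\|f_h^n - \tfrac{1}{\tau^n}(u_h^n - \IntRef u_h^{n-1})\|_{L^2(T)}$ by a constant times $\|\mathcal{R}_h\|_{H^{-1}(\Ga)}$. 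For each interior edge $S$, testing with the lift of $\psi_S \llbracket \nabla_T u_h^n \cdot \mathrm{n}_S\rrbracket$ (with $\psi_S$ the edge bubble, suitably extended to the two triangles adjacent to $S$) controls $h_S^{1/2}\|\llbracket \nabla_T u_h^n \cdot \mathrm{n}_S\rrbracket\|_{L^2(S)}$ by $\|\mathcal{R}_h\|_{H^{-1}(\Ga)}$ plus the element contributions already estimated. Squaring, summing over all elements and edges, and using again that the overlap of the supports is controlled by $\varrho^n$, gives $\eta_h^n \le c\,\|\mathcal{R}_h\|_{H^{-1}(\Ga)}$, which is \eqref{eq:indicator lower bound for R_h}.

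I expect the main obstacle to be the careful bookkeeping of the lift: all bubble-function and quasi-interpolation estimates live on $\Ga_h^n$, whereas $\|\mathcal{R}_h\|_{H^{-1}(\Ga)}$ is a dual norm over $H^1(\Ga)$, so one must verify that lifting the locally supported test functions to $\Ga$ distorts their $L^2$- and $H^1$-norms only by $h$-independent factors — this is exactly where \eqref{eq:norm_equiv} and the smallness hypothesis $h \le h_0$ (ensuring the closest-point projection is well behaved on the relevant patches) enter — and that all constants depend only on $\varrho^n$ and on $\Ga$, which is why no earlier meshes $\Ga_h^j$ appear in this bound. Beyond that, the argument is a faithful surface adaptation of the flat-domain proof in \cite{Verfuehrt1996,Verfuerth2003}.
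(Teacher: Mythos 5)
Your proposal follows essentially the same route as the paper: Galerkin orthogonality plus the surface Scott--Zhang interpolant, elementwise integration by parts on the flat triangles, and the lift norm equivalences for the upper bound; bubble functions transported through the lift for the lower bound. The only cosmetic difference is that the paper tests the residual with a single combined function $w^n = \alpha \sum_T h_T^2 \psi_T(\cdot) - \beta \sum_S \llbracket \cdot \rrbracket \psi_S$ (as in Verf\"urth) rather than element- and edge-wise with a finite-overlap summation, and it explicitly notes absorbing the $L^2$-part of the full $H^1$-norm into the semi-norm via inverse estimates -- both points your argument accommodates.
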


\begin{proof}
We extend the proof of \cite[Section~5]{Verfuerth2003} to surfaces. Notable modifications are required due to the geometric approximation, the use of the full $H^1$-norm and the adjusted residual.

(a) We start by showing the upper bound \eqref{eq:indicator upper bound for R_h}. 
By the definition of the fully discrete problem \eqref{eq:discreteheat} we know $\langle  \mathcal{R}_h(t), v_h \rangle = 0$ for any $v_h \in S_h$.
Thus we have $\langle  \mathcal{R}_h(t), v  \rangle = \langle  \mathcal{R}_h(t), v - I_h^{\textnormal{SZ}} v \rangle$, where $I_h^{\textnormal{SZ}} v = (\widetilde{I}_h^{\textnormal{SZ}} v)^\ell$ is the surface Scott--Zhang interpolation operator from \cite[Section~3]{APostElliptic}. The term $(\nabla_{\Ga_h^n} u_h^n, \nabla_{\Ga_h^n} ( v^{-\ell} - \widetilde{I}_h^{\textnormal{SZ}} v ))_{\Ga_h^n}$ is partially integrated on each element and simplified using the jumps and the fact that $u_h^n$ is linear. 
Finally, Hölder and Cauchy--Schwarz inequalities together with interpolation error estimates \cite[Theorem~3.2]{APostElliptic} for the triangles and \cite[Corollary~3.4]{APostElliptic} for the edges of $\Ga_h^n$  give the upper bound.

The lower bound \eqref{eq:indicator lower bound for R_h} is shown using bubble functions $\psi$ (see, e.g.,~\cite[Section~3]{Verfuehrt1996} for a concise and detailed overview) and is based on the two key relations (cf.~\cite[equation~(5.3)]{Verfuerth2003}):
\addtocounter{equation}{+1}
\begin{align} \tag{\theequation a,b} 
	\|(w^n)^{ \ell } \|_{H^1(\Ga)}  \leq c \, \eta_h^n \qquad \text{and} \qquad \langle \mathcal{R}_h(t), (w^n)^\ell \rangle \geq (\eta_h^n)^2, \label{eq:lower_bounds_proof_space}
\end{align}
with $w^n = \alpha \sum_{T \in \tilde{T}} h_T^2 \psi_T (f_h - \partial_t  \baruhtau) - \beta \sum_{S \in \tilde{S}}  h_S \psi_S  \llbracket \nabla_{T} u_h^n \cdot \mathrm{n}_S \rrbracket $.

The relation can be directly obtained from expanding the expressions, using estimates on bubble functions \cite[Section~3]{Verfuehrt1996}, and appropriate choices of the parameters $\alpha$ and $\beta$. As noted above Verfürth's proof works in the $H^1$-semi-norm. A direct calculation of the $L^2$-norm, combined with bounds in \cite[Lemma~1.3]{Verfuehrt1996}, show that we can absorb the resulting terms into the semi-norm.

We now transform all quantities to $\Ga$, then the two inequalities of \eqref{eq:lower_bounds_proof_space} yield, while applying the equivalence of norms \eqref{eq:norm_equiv} for lifted functions,
\begin{equation*}
	\|\mathcal{R}_h(t)\|_{{H^{-1} (\Ga)}} = \sup_{\substack{v \in H^1(\Ga) \\ v \neq 0}} \frac{\langle \mathcal{R}_h(t), v \rangle}{\|v \|_{H^1(\Ga)}}  \geq \frac{\langle \mathcal{R}_h(t), (w^n)^\ell \rangle}{\| (w^n)^{\ell} \|_{H^1(\Ga)} } \geq \frac{(\eta_h^n)^2}{c \, \eta_h^n} = c \, \eta_h^n .
\end{equation*}

Altogether, this proves that the spatial indicator is equivalent to the dual norm of the spatial residual.
\end{proof} 

\subsection{Bounding the temporal residual}
\label{section:temporal residual bound}

To bound the temporal residual, we again follow \cite[Section~6 and 7]{Verfuerth2003}, always carefully extending the Euclidean results to surfaces, and highlighting the insights and modifications required by geometric approximations, and the coarsening term.
\begin{proposition}
\label{prop:temp}
	(a) The temporal indicator $\eta_\tau^n$ \eqref{eq:indicator - temporal} and the temporal residual $\mathcal{R}_\tau$ \eqref{eq:res_temp} satisfy, for any $0 < t^n \leq T_{\max}$,
	\begin{subequations}
	\begin{equation}
		\label{eq:indicator upper bound for R_tau}
		\|R_\tau\|_{L^2(t^{n-1},t^n;H^{-1}(\Ga))} \leq c \, (\tau^n)^{\frac{1}{2}}  \, \eta_{\tau}^n .
	\end{equation}

	(b) The temporal indicator $\eta_\tau^n$ \eqref{eq:indicator - temporal}  and the error $u-u_{h,\tau}$ satisfy, for any $0 < t^n \leq T_{\max}$,
	\begin{align}
	\label{eq:final_temporal_bound_2}
		(1+h) (\tau^n)^{\frac{1}{2}} \eta_\tau^n \leq &\ c \Big(\|u- u_{h,\tau}\|_{X(t^{n-1},t^n;\Ga)} + \|f-f_h^\ell \|_{L^2(t^{n-1},t^n;H^{-1}(\Ga))} \\
		&\ \hphantom{c \Big( }+ (\tau^n)^{\frac{1}{2}} (\mathcal{G}^n + \eta_c^n)\Big) .
  	\end{align}
	\end{subequations}
	The constant $c > 0$ is independent of $h$ and $\tau^n$, but depends on the shape-regularity constant $\varrho^n$ of $\Ga_h^n$, and on $\Ga$. 
\end{proposition}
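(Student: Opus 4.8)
The plan is to prove the two statements of Proposition~\ref{prop:temp} in turn, following the Euclidean template of \cite[Sections~6--7]{Verfuerth2003} but with every step transported carefully between $\Ga$ and the discrete surface $\Ga_h^n$.

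\textbf{Part (a).}
First I would bound the dual norm of the temporal residual $\mathcal{R}_\tau$ pointwise in time. Fix $t \in (t^{n-1},t^n]$ and $v \in H^1(\Ga)$. From the definition \eqref{eq:res_temp}, $\langle \mathcal{R}_\tau(t), v\rangle$ consists of an $H^1$-semi-inner-product term and an $L^2$-inner-product term in the difference $u_h^n - u_{h,\tau}$, paired against $v^{-\ell}$ on $\Ga_h^n$. By Cauchy--Schwarz on $\Ga_h^n$ and the norm equivalences \eqref{eq:norm_equiv} for lifts, this is controlled by $\|u_h^n - u_{h,\tau}\|_{H^1(\Ga_h^n)}\,\|v\|_{H^1(\Ga)}$, so that $\|\mathcal{R}_\tau(t)\|_{H^{-1}(\Ga)} \leq c\,\|u_h^n - u_{h,\tau}\|_{H^1(\Ga_h^n)}$. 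Next, using the piecewise linear-in-time definition \eqref{eq:fully discrete solution definition}, we have $u_h^n - u_{h,\tau}(t) = \frac{t^n - t}{\tau^n}(u_h^n - \IntRef u_h^{n-1})$, whose $H^1(\Ga_h^n)$-norm is bounded by $\|u_h^n - \IntRef u_h^{n-1}\|_{H^1(\Ga_h^n)}$ since $\tfrac{t^n - t}{\tau^n}\in[0,1]$. Squaring, summing the element contributions and recognising \eqref{eq:indicator - temporal} gives $\|\mathcal{R}_\tau(t)\|_{H^{-1}(\Ga)} \leq c\,\eta_\tau^n$ uniformly in $t \in (t^{n-1},t^n]$; integrating the square over $(t^{n-1},t^n]$ produces the factor $\tau^n$ and yields \eqref{eq:indicator upper bound for R_tau}.

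\textbf{Part (b).}
For the efficiency bound \eqref{eq:final_temporal_bound_2} I would reverse the roles: estimate $\eta_\tau^n$ from below by the error and oscillation. The key observation is that $u_h^n - \IntRef u_h^{n-1}$ is, up to lifting, twice the temporal oscillation of the fully discrete solution around the midpoint, so that $\eta_\tau^n \sim \|u_{h,\tau}^\ell(\cdot,t^n) - u_{h,\tau}^\ell(\cdot,t^{n-1})\|$ in the relevant norm (here the $H^1$-type norm). Following Verfürth, I would test the defining relation \eqref{discreteheat} with a suitable function — essentially $v_h = u_h^n - \IntRef u_h^{n-1}$ transported appropriately — and compare with the continuous weak form \eqref{eq:weak_form}, so that the difference $u_h^n - \IntRef u_h^{n-1}$ is expressed through $\partial_t u$, $\nabla_\Ga(u - u_{h,\tau}^\ell)$, the data error $f - f_h^\ell$, and geometric remainder terms controlled by $h^2$ via Lemmas~\ref{lem:trafo} and~\ref{lemma:geo} and the identities $\|1-\mu_h^n\|_{L^\infty}\le ch^2$, $\|d\|_{L^\infty}\le ch^2$. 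Using the graph norm $\|\cdot\|_{X(t^{n-1},t^n;\Ga)}$ from \eqref{eq:graph_norm} to absorb the $L^\infty$-in-$L^2$, $L^2$-in-$H^1$, and $L^2$-in-$H^{-1}$ pieces of $u - u_{h,\tau}^\ell$, and noting that the coarsening indicator $\eta_{\textnormal c}^n$ is dominated by $\eta_\tau^n$ (the lemma of Section~\ref{section:coarsening residual}), one arrives after squaring and integrating over $(t^{n-1},t^n]$ at $(1+h^2)(\tau^n)^{1/2}\eta_\tau^n \le c(\|u-u_{h,\tau}^\ell\|_{X(t^{n-1},t^n;\Ga)} + \|f - f_h^\ell\|_{L^2(t^{n-1},t^n;H^{-1}(\Ga))})$. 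Throughout, the norm equivalences \eqref{eq:norm_equiv} let one pass between $\Ga_h^n$ and $\Ga$ at the cost of $h$-independent constants, and the factor $(1+h^2)$ simply carries the geometric perturbation.

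\textbf{Main obstacle.}
I expect the principal difficulty to lie in Part (b), specifically in handling the refinement interpolant $\IntRef$ when comparing discrete quantities across the time step: $\IntRef u_h^{n-1}$ lives on $\Ga_h^n$ but encodes data from $\Ga_h^{n-1}$, so the "comparison with the continuous problem at the previous step" introduces a mismatch between the mesh at $t^{n-1}$ and $t^n$ that is absent in the flat, fixed-mesh analysis of \cite{Verfuerth2003}. One must argue that lifting $\IntRef u_h^{n-1}$ to $\Ga$ and comparing with $u_{h,\tau}^\ell(\cdot,t^{n-1})$ costs only controlled, non-dominant terms — leaning on the construction of $\IntRef$ in Section~\ref{sec_sub:ref_int} (nodal interpolation through an intermediate mesh whose new nodes are lifted onto $\Ga$) and on the stability of the lift. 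A secondary subtlety is that Verfürth's argument is naturally phrased in the $H^1$-seminorm, whereas the surface problem forces the full $H^1$-norm; as in Proposition~\ref{prop:spatial_res}, one absorbs the $L^2$-part of $u_h^n - \IntRef u_h^{n-1}$ into the seminorm using \cite[Lemma~1.3]{Verfuehrt1996}, which also explains why $\eta_{\textnormal c}^n$ can be folded into $\eta_\tau^n$.
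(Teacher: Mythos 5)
Your Part (a) is correct and follows the paper's argument essentially verbatim: rewrite $u_h^n - u_{h,\tau}$ via the linear-in-time interpolation, bound the dual norm of $\mathcal{R}_\tau(t)$ by $\|u_h^n - u_{h,\tau}\|_{H^1(\Ga_h^n)}$ using Cauchy--Schwarz and the norm equivalences \eqref{eq:norm_equiv}, and integrate in time.

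Part (b), however, has a genuine gap. When you test with (the lift of) $u_h^n - u_{h,\tau}$ and replace $\mathcal{R}_\tau + \Rcoarse$ by $\mathcal{R}(u_{h,\tau}^\ell) - \mathcal{R}_h - \mathcal{R}_{\textnormal{g}} - \osc$, the full residual $\mathcal{R}(u_{h,\tau}^\ell)$ is indeed controlled by the error graph norm via \eqref{eq:res_equiv_1}, and the geometric and coarsening pieces fold back into $\eta_h^n$ and $\eta_\tau^n$ as you say — but this leaves a spatial contribution of size $(\tau^n)^{1/2}\eta_h^n$ on the right-hand side (this is exactly the paper's intermediate estimate \eqref{eq:indicator lower bound to error + R_h}). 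The statement \eqref{eq:final_temporal_bound_2} contains no $\eta_h^n$, so you must additionally prove that $(\tau^n)^{1/2}\eta_h^n$ is itself bounded by the error and the oscillation. This cannot be done by simply invoking \eqref{eq:indicator lower bound for R_h} and \eqref{eq:res_equiv_1}, because expressing $\|\mathcal{R}_h\|_{H^{-1}}$ through the full residual reintroduces $\|\mathcal{R}_\tau\|$, i.e.\ $\eta_\tau^n$, and the argument becomes circular. The paper breaks the circularity with the weighted-in-time bubble-function argument of \cite[equations~(7.1)--(7.4)]{Verfuerth2003}: the second inequality of \eqref{eq:lower_bounds_proof_space} is multiplied by $(\alpha+1)\big(\tfrac{t-t^{n-1}}{\tau^n}\big)^{\alpha}$ with $\alpha > -1/2$ and integrated over $(t^{n-1},t^n]$; the resulting estimate has $\eta_h^n$ on both sides with an $\alpha$-dependent constant, and a suitable choice of $\alpha$ (here also depending on the norm-equivalence constants) allows the left-hand occurrence to absorb the right-hand one. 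Your proposal never mentions this step, and without it the claimed bound does not follow. Incidentally, the obstacle you flag as principal — the mesh mismatch induced by $\IntRef$ — is largely defused by the paper's setup, since all residuals are posed on $\Ga_h^n$ and $\IntRef u_h^{n-1}$ already lives in $S_h^n$; the real work in Part (b) is the absorption of the spatial indicator.
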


\begin{proof}
(a) Starting with the upper bound \eqref{eq:indicator upper bound for R_tau}, we remark that $\baruhtau(t)$ is piecewise linear in time, thus we rewrite the error in the temporal residual \eqref{eq:res_temp}, for any $t \in (t^{n-1},t^n]$, as
\begin{align*}
	u_h^n - \baruhtau = \left(1-\frac{t-t^{n-1}}{\tau^n}\right)(u_h^n- \IntRef u_h^{n-1}) .
\end{align*}
Then, via norm equivalence \eqref{eq:norm_equiv}, we have
\begin{align*}
	\|\mathcal{R}_\tau (t)\|_{H^{-1} (\Ga)} 
	&\leq \sup_{\substack{v \in H^1(\Ga) \\ v \neq 0}} \frac{\|\nabla_{\Ga_h^n} (u_h^n-  \baruhtau)\|_{L^2(\Ga_h^n)} \|\nabla_{\Ga_h^n} v^{-\ell}\|_{L^2(\Ga_h^n)}}{c \|v^{-\ell} \|_{H^1(\Ga_h^n)}} \\
	&\leq c \|\nabla_{\Ga_h^n} (u_h^n-  \baruhtau)\|_{L^2(\Ga_h^n)}.
\end{align*}
Integrating this inequality over $(t^{n-1},t^n]$,and recalling \eqref{eq:indicator - temporal} yields the upper bound.

(b) For the bound between the temporal indicator and the error \eqref{eq:indicator lower bound to error + R_h}, we follow the proof in \cite[Section~7]{Verfuerth2003}, the computation is straightforward, thus we focus on the differences which arise mostly from the additional geometric and coarsening residual and due to the natural $H^1$-norm.

Similar to \cite{Verfuerth2003} we seek a test function $w$ which gives a direct relation between temporal indicator and temporal residual, due to the full $H^1$-norm it will be beneficial to take a similar test function as Verfürth but subtract its mean. Let $\overline{c} := \frac{1}{|\Ga|} \int_\Ga (u_h^n - \baruhtau)^\ell$ then
\begin{equation}
\label{eq:two expressions for res}
	\begin{aligned}
		\frac{\tau^n}{3} (\eta_\tau^n)^2 = &\ \int_{t^{n-1}}^{t^n} \langle \mathcal{R}_\tau (t), (u_h^n - \baruhtau)^\ell - \overline{c} \rangle \d t, \\
	\end{aligned}
\end{equation}
Note that the temporal residual \eqref{eq:res_temp} is invariant under constant shifts.

Now using the residual decomposition \eqref{eq:res_eq} and \eqref{eq:two expressions for res}, yields
\begin{align*}
	\frac{\tau^n}{3} (\eta_\tau^n)^2 
	\leq &\ \int_{t^{n-1}}^{t^n} \langle \mathcal{R}_\tau (t), (u_h^n - \baruhtau)^\ell - \overline{c} \rangle \d t \\
	\leq &\ c (1+C_{\textnormal{P}}^2)^{\frac{1}{2}} \, \Big( \frac{\tau^n}{3} \Big)^{\frac{1}{2}} \eta_\tau^n \, \Big(\|u- u_{h,\tau}\|_{X(t^{n-1},t^n;\Ga)} + \|f-f_h^\ell \|_{L^2(t^{n-1},t^n;H^{-1}(\Ga))} \\
	&\ \qquad + (\tau^n)^{\frac{1}{2}} \, \eta_h^n + \|\mathcal{R}_{\textnormal{g}}\|_{L^2(t^{n-1},t^n;H^{-1}(\Ga))} +\|\Rcoarse\|_{L^2(t^{n-1},t^n;H^{-1}(\Ga))} \Big),
\end{align*}
We immediately used Proposition~\ref{prop:spatial_res}, and the bound 
\begin{align*}
\| (u_h^n - \baruhtau)^\ell\!-\!\overline{c}\|_{L^2(t^{n-1},t^n;H^{1}(\Ga))}  &\leq (1\!+\!C_{\textnormal{P}}^2)^{\frac{1}{2}} \Big(\int_{t^{n-1}}^{t^n} \|\nabla_\Ga (u_h^n - \baruhtau)^\ell\|^2_{L^2(\Ga)}\Big)^{\frac{1}{2}} \\
&\leq (1\!+\!C_{\textnormal{P}}^2)^{\frac{1}{2}} \Big( \frac{\tau^n}{3} \Big)^{\frac{1}{2}} \Big\|\nabla_\Ga \big((u_h^n - \IntRef u_h^{n-1})^\ell \big)\Big\|_{L^2(\Ga)} \\
&\leq c(1\!+\!C_{\textnormal{P}}^2)^{\frac{1}{2}} \Big( \frac{\tau^n}{3} \Big)^{\frac{1}{2}} \eta_\tau^n
\end{align*}
obtained by a Poincaré inequality (with the constant $C_{\textnormal{P}}>0$) for the mean-free test function $(u_h^n - \baruhtau)^\ell - \overline{c}$, direct calculation of \eqref{eq:fully discrete solution definition}, and norm equivalence \eqref{eq:norm_equiv}.

The geometric residual term is bounded in terms of the spatial and temporal residual via the time-integrated estimate \eqref{eq:geo_res_bound}.
Rearranging the terms,  and using $h \leq h_0$  yields the intermediate result
\begin{equation}
\label{eq:indicator lower bound to error + R_h}
	\begin{aligned}
		(1+ h ) (\tau^n)^{\frac{1}{2}} \, \eta_\tau^n \leq &\ c \Big(\|u- u_{h,\tau}\|_{X(t^{n-1},t^n;\Ga)} + \|f-f_h^\ell \|_{L^2(t^{n-1},t^n;H^{-1}(\Ga))} \\
		&\ \phantom{c \Big( } +  (\tau^n)^{\frac{1}{2}} \mathcal{G}^n + (\tau^n)^{\frac{1}{2}}\eta_c^n  + (\tau^n)^{\frac{1}{2}} \eta_h^n \Big) .
	\end{aligned}
\end{equation}

\paragraph{Bounding the spatial indicator}
\label{lower_bound_spatial}
We now bound the spatial indicator $(\tau^n)^{\frac{1}{2}} \eta_h^n$ by the error,  oscillation term,  $\mathcal{G}^n$, and $\eta_c$ , which shows \eqref{eq:final_temporal_bound_2}.

To bound the spatial term we follow \cite[equation~(7.1)--(7.4)]{Verfuerth2003}, but the new geometric and coarsening residuals need to be carefully estimated during the decomposition. 

We take the second inequality from \eqref{eq:lower_bounds_proof_space}, multiply the expression by the factor $(\alpha +1)\left(\frac{t-t^{n-1}}{\tau^n}\right)^\alpha$ and integrate over $(t^{n-1},t^n]$ where $\alpha \geq 0$. The idea in \cite{Verfuerth2003} is to exchange the spatial residual by the decomposition \eqref{eq:res_eq}, and then bounding each term separately, for $h \leq h_0$ sufficiently small. For the present case these estimates are nearly identical, and thus omitted here.

The geometric residual is bound by the temporal and spatial residuals, and the geometric indicator \eqref{eq:indicator - geo}, using Lemma~\ref{lemma:geo}. 

We obtain an estimate involving the spatial and temporal indicators, the graph norm of the error, and the oscillation:
\begin{align*}
	\tau^n (\eta_h^n)^2 \leq &\ c_\alpha \, (\tau^n)^{\frac{1}{2}} \eta_h^n \Big(\|u-u_{h,\tau} \|_{X(t^{n-1},t^n; \Ga)} + \|f-f_h^\ell\|_{L^2(t^{n-1},t^n;H^{-1} (\Ga))} \\
	&\  \qquad  + c \, (\tau^n)^{\frac{1}{2}} \mathcal{G}^n + c \, (\tau^n)^{\frac{1}{2}} \eta_c  + c  h  (\tau^n)^{\frac{1}{2}} \eta_h^n \Big)  + c_\alpha (1+ h ) (\tau^n)^{\frac{1}{2}} \eta_\tau^n .
\end{align*}
Here the $\alpha$ subscript denotes an $\alpha$ dependency of the constant.
 
Using \eqref{eq:indicator lower bound to error + R_h} to bound the temporal indicator, simplifications and including the factor $(1+h)$ results in the inequality,
\begin{align*}
	(1+ h) (\tau^n)^{\frac{1}{2}} \eta_h^n \leq &\ c_\alpha \big(\|u-u_{h,\tau} \|_{X(t^{n-1},t^n; \Ga)} + \|f-f_h^\ell\|_{L^2(t^{n-1},t^n;H^{-1} (\Ga))} \\
	&\ \phantom{c_\alpha \big( }  + c \, (\tau^n)^{\frac{1}{2}} \mathcal{G}^n + c \, (\tau^n)^{\frac{1}{2}} \eta_c  \big) +  c_\alpha (1+ h ) (\tau^n)^{\frac{1}{2}} \eta_h^n ,
\end{align*}
which only contains the spatial indicator, the error,  the geometric $\mathcal{G}$, the coarsening,  and the oscillation term.
 
The spatial indicator appears on both sides of the inequality, however an appropriate choice of $\alpha \geq 0$, see \cite{Verfuerth2003}, (which, compared to \cite{Verfuerth2003}, additionally depends on the constant from the norm equivalence, but does not depend on $h$), allows us to absorb the spatial indicator from the left-hand side, while retaining a positive constant on the right-hand side. Thus the spatial residual is bounded by the oscillation, the geometric $\mathcal{G}$, the coarsening, and the error term. 

This directly infers the same for the temporal indicator by \eqref{eq:indicator lower bound to error + R_h}. 
\end{proof}

\subsection{Proof of Theorem~\ref{thm:upper_lower}}
\label{section:thm proof}
~ \newline
(a) We will now prove the global upper bound \eqref{eq:reliability estimate}. 
Altogether, the upper bound follows from the residual decomposition \eqref{eq:res_eq} combined with the previously shown upper bounds (marked by (a)). 
We have, for any $0 \leq t^n \leq T_{\max}$, (again dropping the omnipresent $\Ga$ within norms)
\begin{align*}
	\|\mathcal{R}(u_{h,\tau}) \|_{L^2(0,t^n; H^{-1} )}
	\leq &\ \|\mathcal{R}_h \|_{L^2(0,t^n; H^{-1} )} + \|\mathcal{R}_\tau \|_{L^2(0,t^n; H^{-1} )} + \|\!\osc(f)\|_{L^2(0,t^n; H^{-1})} \\
	&\ +\|\Rcoarse \|_{L^2(0,t^n; H^{-1} )}  + \|\mathcal{R}_{\textnormal{g}} \|_{L^2(0,t^n; H^{-1} )}, \\
\end{align*} 
We will now combine the upper bounds to the above terms (breaking $(0,t^n]$ into the subintervals $(t^{n-1},t^n]$). That is, respectively, 
the spatial bound \eqref{eq:indicator upper bound for R_h} integrated over time, 
the temporal bound \eqref{eq:indicator upper bound for R_tau}, 
the coarsening bound \eqref{eq:coarse_bound_new}, 
and the geometric bound \eqref{eq:geo_res_bound}.
Finally, plugging into \eqref{eq:res_equiv_2} shows the upper bound.

(b) The local lower bound \eqref{eq:efficiency estimate} follows by recalling \eqref{eq:indicator}, and using the lower bounds \eqref{eq:indicator lower bound for R_h} and \eqref{eq:final_temporal_bound_2}, combined with the absorption of the geometric residual by \eqref{eq:geo_res_bound} (excluding $\mathcal{G}^n$).

This completes the proof of Theorem~\ref{thm:upper_lower}. \qed

\section{Adaptive algorithm for parabolic surface PDEs}
\label{adaptivity}

In this chapter we provide a simple adaptive routine (Algorithm~\ref{alg:base_alg}) which guarantees a global error bound by a given tolerance, assuming 
the algorithm terminates. We highlight the main differences which arise when constructing an adaptive algorithm for parabolic surface PDEs. Adaptive algorithms for parabolic PDEs on euclidean domains are well studied. Early contributions \cite{Dupont1982MeshMF} focussed on necessary properties for adaptation in space and time and in \cite{Bieterman1982TheFE} they proposed an adaptive algorithm based on a-posteriori error estimation.

We introduce a similar algorithm to \cite[Algorithm~3.2]{chen2004feng_adap_flat}, since their indicators where also based on the a posteriori error estimates derived by Verfürth \cite{Verfuerth2003}, i.e.\ there is the split in temporal and spatial errors. We want to highlight the effects of the curved domain, in particular related to the lift procedure, for a rather simple adaptive approach, as we are not aware of any parabolic adaptive algorithm for surface PDEs. 

We note that there are more modern adaptive algorithms showing convergence and optimality for parabolic PDEs in flat domains like \cite{AdaptiveAlgHeatEq}. We will not focus on proving convergence or optimality but if we can ensure the shape-regularity property throughout time, using for example the procedure described in \cite{Bonito2013AFEMFG}, and handle the slightly changing domain the results from the flat case should be transferable to surfaces.

In the same manner as in \cite{chen2004feng_adap_flat} or \cite{AdaptiveAlgHeatEq} we control the spatial \eqref{eq:indicator - spatial} and temporal indicators \eqref{eq:indicator - temporal} with two different refinements. Temporal refinements reduce the time step-size by some factor in $(0,1)$, this is indifferent to the flat case. The spatial refinement controls the error of the elliptic subproblem \eqref{eq:discreteheat} in each timestep plus the contribution by the geometric indicator \eqref{eq:indicator - geo}. This is done with the common solve--estimate--mark--refine routine (see \cite{Doerfler1996}). But we have to be careful with new nodes created during refinement. First the prior solution $u_h^{n-1}$ has to be evaluated at the refined nodes to obtain $\IntRef u_h^{n-1}$, as described in Section~\ref{sec_sub:ref_int}. Afterwards the nodes have to be lifted before solving again to keep the property that our triangulation is an interpolation of $\Ga$ and to fit in the framework introduced in Section~\ref{sec:res_coarse} to show the bounds for the coarsening indicator \eqref{eq:indicator - coarse}.

If all error indicators are smaller than the tolerance we store the solutions. We reinitialize for the next step, coarsen in time by initializing the new time-step size as a multiple of the last time-step size, and coarsen spatially via the coarsening indicator. Notice that temporal refinements increase (by decreasing the time step size) and spatial refinements decrease the coarsening indicator (by refinements of coarse elements), which is why we recheck the size of the coarsening indicator after temporal refinements.

\begin{algorithm}[H]
\caption{Adaptive Algorithm for Parabolic Surface PDEs}
\label{alg:base_alg}
\begin{algorithmic} 
\STATE Given $\Ga_h^0$, $\tau^0$, $u_h^0$, $\TOL$, and $\theta_{\textnormal{c}} = 1$.
\WHILE{$t < T_{\textnormal{end}}$}
\STATE Coarsen space as long as $(\eta_c^n)^2 \leq \theta_{\textnormal{c}} \TOL$; \, $\Ga_h^n \leftarrow \textnormal{coarse}(\Ga_h^{n-1})$
\WHILE{$(\eta^n _{h})^2 + (\mathcal{G}^n)^2 \geq \TOL$} 
\STATE Solve \eqref{eq:discreteheat}; Estimate \eqref{eq:indicator}; Mark, Refine \& Lift (Section~\ref{ch:impl})
\ENDWHILE
\vspace*{0.15cm}
\IF{$(\eta_{\tau}^n)^2 \geq \TOL$}
\STATE $t \leftarrow t-\tau$; $\tau \leftarrow \frac{\tau}{2}$; $\eta_{h}^n \leftarrow \infty$; Return to spatial loop.
\IF{$(\eta_{\textnormal{c}}^n)^2 \geq \TOL$}
\STATE $\theta_{\textnormal{c}} \leftarrow \frac{\theta_{\textnormal{c}}}{2}$; Return to coarsening.
\ELSE
\STATE Store solution.
\STATE Coarsen time $\tau = 2 \tau$; Reset $\theta_{\textnormal{c}} = 1$.
\ENDIF
\ENDIF
\ENDWHILE
\end{algorithmic}
\end{algorithm}

\section{Implementation}
\label{ch:impl}
We give a brief description of marking, refining and coarsening strategies,  and on the implementation of the refinement interpolation.  

\subsection{Marking, refining and coarsening}
\label{section:mark refine coarsen}
There are a variety of marking strategies, we will briefly recall two from Dörfler \cite{Doerfler1996}. We assume that the local error indicators are determined elementwise for all $T$ of $\Ga_h^n$, following the formulas in \eqref{eq:indicator}.

Now a subset of triangles is chosen fulfilling a marking strategy. Collect the set of marked elements in the set $\mathcal{M} \subset \Ga_h^n$.  Given a parameter $\theta \in (0,1)$ and the maximal indicator $\eta_{\max} := \max_{ T \in \Ga_h^n }  \eta(T)$ and global indicator $\eta_{\Ga_h^n}^2 = \sum_{ T \in \Ga_h^n } \eta(T)^2$, consider two \textit{refinement} criteria:
\begin{alignat*}{3}
	&\ \textnormal{- bulk criterion:} & \quad & \text{mark a subset } \mathcal{M} \subset \Ga_h^n \text{ such that } \eta (T) \geq \theta \, \eta_{\max} \quad \forall T \in \mathcal{M}, \\
	&\ \textnormal{- D\"orfler criterion:} & \quad & \text{mark a subset } \mathcal{M} \subset \Ga_h^n \text{ such that } 
			\sum_{T \in \mathcal{M}} \eta (T)^2 \geq (1- \theta) \, \eta_{\Ga_h^n}^2 .
\end{alignat*}

Both criteria mark nearly all elements if $\theta \approx 0$ and refine very few elements if $\theta \approx 1$. The D\"orfler criterion \cite[Section 4.2]{Doerfler1996} for flat elliptic problems was proven to be uniformly convergent for an elliptic model problem in \cite{Doerfler1996}. Furthermore, the D\"orfler criterion is implemented such that elements with the largest local error are added to $\mathcal{M}$ until the inequality holds. The bulk criterion is also known as the maximum strategy.

Analogously, for \textit{coarsening} we use:
\begin{alignat*}{3}
	&\ \textnormal{- bulk criterion:} & \quad & \text{mark a subset } \mathcal{M} \subset \Ga_h^n \text{ such that } \eta (T) \leq \theta^\star \, \eta_{\max} \quad \forall T \in \mathcal{M}, \\
	&\ \textnormal{- D\"orfler criterion:} & \quad & \text{mark a subset } \mathcal{M} \subset \Ga_h^n \text{ such that } \sum_{T \in \mathcal{M}} \eta (T)^2 \leq \theta^\star \,  \eta_{\Ga_h^n}^2 .
\end{alignat*}

We chose the newest vertex bisection (NVB) as the refinement strategy, which is adapted from the flat case. By lifting the new nodes after each refinement step we ensure that our discrete surfaces stay an interpolation and adapt to the geometry correctly. We adapted the efficient refinement implementations of \cite{Funken2019AdaptiveMR}. For more insight into refinement strategies we refer to \cite{Verfuehrt1996,Funken2019AdaptiveMR}.

The coarsening is non-trivial, especially if the mesh history should not be explicitly stored. We avoid hanging nodes by only selecting ``good to coarsen'' nodes. Details for NVB are given by Chen and Zhang in \cite{Chen2010NVBCoarse}. We adapted their coarsening implementations to surfaces which includes lifting, see \cite{CoarsenNVB} for the Euclidean code.

\subsection{Computing the refinement interpolation}
\label{section:refinement interpolation - implementation}
We give a description on the implementation of the \emph{refinement interpolant}. The main idea was already stated in Section~\ref{sec_sub:ref_int}.

We evaluate the function $u_h^{n-1}$ for new nodes created during refinement. If an element is chosen for refinement we first refine on $\Ga_h^{n-1}$, store the evaluation of $u_h^{n-1}$ at the new nodes and create new subtriangles. Afterwards we lift the new nodes onto $\Ga$ to obtain the new discrete surface $\Ga_h^n$. 
We assume that we stored the evaluations of $u_h^{n-1}$ for all nodes $z_i \in \overline{\mathcal{N}^{n}}$, where $\overline{\mathcal{N}^{n}}$ collects all nodes after refinement, but without lifting. Write those evaluations as $u_{h,i}^{n-1 \rightarrow n}$, notice that the numbering does in general not coincide with the prior indices of $u_{h,i}^{n-1}$. 
Then the interpolation is given as $\IntRef u_h^{n-1} = \sum_{i = 1}^{N^n} u_{h,i}^{n-1 \rightarrow n} \phi_i^n$. This allows us to represent the function $u_h^{n-1}$ on $\Ga_h^n$.

Finally,  the new nodes are numerically lifted onto the surface $\Ga$, this is performed  using DistMesh \cite{distmesh}.
 In a similar fashion we can compute the common interpolations.  

\section{Numerical experiments}
\label{ch:experiments}
We report on numerical experiments illustrating and complementing our theoretical results. The code is  based on the $\ell$FEM Matlab package \cite{ellFEM}, and  is publicly accessible on \texttt{\url{https://git.uni-paderborn.de/lantelme/parabolic-stationary-asfem}}. 
The code was structured according to Algorithm~\ref{alg:base_alg}.

For all experiments, we use the bulk criterion for marking. The initial meshes are generated using DistMesh \cite{distmesh}. 
In our numerical experiments, we use the finite element interpolation instead of the $L^2$-projection. This introduces an error of second order, hence this will not affect the properties of our indicator. 

\subsection{Errors and number of nodes}
\label{sec:Ex1}
We consider the PDE \eqref{eq:heat_strong} on the unit sphere $\Ga = \{x \in \R^3 \mid |x| = 1\}$, 
and we let $f(x,t) = 5 \exp(-t) x_1 x_2$ such that the exact solution is known to be $u(x,t) = \exp(-t) x_1 x_2$.
In Figure~\ref{fig:ex1_l2 - numnodes}~(a) we see the discrete error asymptotic for various tolerances. In Figure~\ref{fig:ex1_l2 - numnodes}~(b) we see the exponential decay in the number of nodes, where we set the tolerance $\TOL = 0.2$, for better illustration $\TOL_{coarse} = 10 \TOL$, and the final time to $T = 10$.

\begin{figure}
    \centering
    \begin{subfigure}{0.48\linewidth}
        \centering
        \resizebox{\linewidth}{\linewidth}{
\begin{tikzpicture}

\begin{axis}[%
width=0.9\linewidth,
height=0.9\linewidth,
at={(0cm,0cm)},
scale only axis,
xmode=log,
xmin=0.05,
xmax=1,
xminorticks=true,
xlabel style={font=\color{white!15!black}},
xlabel={$\TOL$},
ymode=log,
ymin=0.001,
ymax=0.2,
yminorticks=true,
axis background/.style={fill=white},
title={(a) $L^\infty(L^2)$- and $L^2(H^1)$-errors},
legend style={at={(0.03,0.97)}, anchor=north west, legend cell align=left, align=left, draw=white!15!black}
]
\addplot[only marks, mark=o, mark options={}, mark size=2pt, draw=black] table[row sep=crcr]{%
	x	y\\
0.774596669241483	0.0629791918586837\\
0.632455532033676	0.0557957431728085\\
0.447213595499958	0.0344147948286602\\
0.316227766016838	0.0251651772172363\\
0.223606797749979	0.0179596765205889\\
0.187082869338697	0.0154137873175792\\
0.14142135623731	0.0122128066903707\\
0.1	0.00820209861958757\\
0.0707106781186548	0.00630057618458235\\
};
\addlegendentry{$\|e_h^n\|_{L^2(H^1)}$}

\addplot[only marks, mark=x, mark options={}, mark size=3pt, draw=black] table[row sep=crcr]{%
x	y\\
0.774596669241483	0.0157411459526324\\
0.632455532033676	0.0128148493508229\\
0.447213595499958	0.0109770793301243\\
0.316227766016838	0.00793091809002189\\
0.223606797749979	0.0053325550346375\\
0.187082869338697	0.00466325008901014\\
0.14142135623731	0.00382123639216772\\
0.1	0.00256524963422352\\
0.0707106781186548	0.00190805818766581\\
};
\addlegendentry{$\|e_h^n\|_{L^\infty(L^2)}$}

\addplot [color=white!70!black, dashed]
table[row sep=crcr]{%
	0.0707106781186548	0.00544472221513642\\
	0.707106781186548	0.0544472221513642\\
};
\addlegendentry{$\mathcal{O} (\sqrt{TOL})$}

\addplot [color=white!70!black, dashed]
  table[row sep=crcr]{%
0.0707106781186548	0.00164048773235279\\
0.707106781186548	0.0164048773235279\\
};

\end{axis}
\end{tikzpicture}
    \end{subfigure}
    \hfill
    \begin{subfigure}{0.48\linewidth}
        \centering
        \resizebox{\linewidth}{\linewidth}{
%
\begin{tikzpicture}

\begin{axis}[%
width=0.9\linewidth,
height=0.9\linewidth,
at={(0cm,0cm)},
scale only axis,
xmin=0,
xmax=10,
xlabel style={font=\color{white!15!black}},
xlabel={$t$},
ymode=log,
ymin=800,
ymax=10000,
yminorticks=true,
ylabel style={font=\color{white!15!black}},
ylabel={degrees of freedom},
title={(b) number of nodes},
axis background/.style={fill=white},
legend style={scale=1, at={(northwest)}, anchor=northwest}
]
\addplot[const plot, color=black, line width=1.0pt, forget plot, mark=o] table[row sep=crcr] {%
0	8019\\
0.15625	8019\\
0.3125	7715\\
0.625	7682\\
0.9375	5527\\
1.5625	4478\\
2.8125	2874\\
5.3125	1745\\
10	1079\\
};
\end{axis}
\end{tikzpicture}
    \end{subfigure}
    \caption{(a) $L^\infty(L^2)$- and $L^2(H^1)$-errors for a set of tolerances. (b) Number of nodes over time for an exponentially decaying solution.}
    \label{fig:ex1_l2 - numnodes}
\end{figure}

\subsection{Efficiency and reliability test}
\label{sec:ex4}

To illustrate that the error indicator behaves efficiently and reliably as shown in Theorem~\ref{thm:upper_lower}, we computed the errors and error indicators for the problem in Section~\ref{sec:Ex1} without adaptivity.

In Figure~\ref{fig:equivalence} we report on the errors  between the exact solution and the lifted discrete solution (computed via a sufficiently high-order quadrature rule)   and residual-based estimators (for time-step sizes $\tau = 1, 0.1, 0.01$ over the time interval $[0,1]$, and meshes with various degrees of freedom, see plots). 
For fine meshes the temporal errors dominate, hence the error curves flatten out, while for coarser meshes the spatial error is dominating, and the error curves nicely follow the expected convergence order (see reference lines). 
It is clearly observable that the estimator curves behave like the error curves (roughly a constant multiple of each other), i.e.~the indicator is indeed efficient and reliable  with respect to the graph norm $\|\cdot \|_X$, see \eqref{eq:graph_norm}, in particular observe the curves with the largest time step size $\tau = 1$. We note that, as for the flat case \cite{Verfuerth2003}, the estimator is not shown to be optimal in terms of the $L^\infty(L^2)$-norm. 
\begin{figure}[htbp]
	\centering
	\resizebox{1\linewidth}{0.55\linewidth}{
\begin{tikzpicture}

\begin{axis}[%
width=3.486in,
height=4.245in,
at={(1.354in,0.573in)},
scale only axis,
xmode=log,
xmin=0.0347065383395735,
xmax=1.633,
xminorticks=true,
xlabel style={font=\color{white!15!black}},
xlabel={$h$},
ymode=log,
ymin=0.0001,
ymax=250,
yminorticks=true,
axis background/.style={fill=white},
title={\large{Estimator and $L^\infty(L^2)$  error for different $\tau$}},
legend style={at={(0.01,0.99)}, anchor=north west, legend cell align=left, align=left, draw=white!15!black},
legend columns=2,
]
\addplot [color=white!40!black, mark=square, mark options={solid, white!40!black}, line width=1pt,mark size=3pt]
  table[row sep=crcr]{%
1.633	0.113943858168359\\
1.01089892691324	0.0985902732399946\\
0.541588507842708	0.020225178551037\\
0.275912111333053	0.0127456412164065\\
0.138617569905748	0.0213570587621034\\
0.0693921825559949	0.0236539296760104\\
0.0347065383395735	0.0242363902591298\\
};
\addlegendentry{$L^\infty(L^2)$ error $(\tau \! =\! 1)$}

\addplot [color=white!70!black, mark=square, mark options={solid, white!70!black}, line width=1pt,mark size=3pt]
  table[row sep=crcr]{%
1.633	6.48535418543423\\
1.01089892691324	4.1818026120648\\
0.541588507842708	2.67796062049484\\
0.275912111333053	1.79499423360922\\
0.138617569905748	1.49709315771932\\
0.0693921825559949	1.39745637588206\\
0.0347065383395735	1.36062871364868\\
};
\addlegendentry{estimator $\eta$ $(\tau \! =\! 1)$}

\addplot [color=white!40!black, mark=o, mark options={solid, white!40!black}, line width=1pt,mark size=3pt]
  table[row sep=crcr]{%
1.633	0.253561634170624\\
1.01089892691324	0.225345833941862\\
0.541588507842708	0.0747780000234978\\
0.275912111333053	0.018652199618602\\
0.138617569905748	0.00324327283278135\\
0.0693921825559949	0.00219250763023456\\
0.0347065383395735	0.0031650775029385\\
};
\addlegendentry{$L^\infty(L^2)$ error $(\tau \! =\! 0.1)$}

\addplot [color=white!70!black, mark=o, mark options={solid, white!70!black}, line width=1pt,mark size=3pt]
  table[row sep=crcr]{%
1.633	9.79963306123265\\
1.01089892691324	5.86107584098899\\
0.541588507842708	3.24914817919574\\
0.275912111333053	1.50187733130747\\
0.138617569905748	0.726042129727878\\
0.0693921825559949	0.378186799373583\\
0.0347065383395735	0.227026867762192\\
};
\addlegendentry{estimator $\eta$ $(\tau \! =\! 0.1)$}

\addplot [color=white!40!black, mark=x, mark options={solid, white!40!black}, line width=1pt,mark size=3pt]
  table[row sep=crcr]{%
1.633	0.275002425269874\\
1.01089892691324	0.241681311438953\\
0.541588507842708	0.0828890522071449\\
0.275912111333053	0.0228194905707763\\
0.138617569905748	0.00562720891868844\\
0.0693921825559949	0.00117254641185743\\
0.0347065383395735	0.000224433434978408\\
};
\addlegendentry{$L^\infty(L^2)$ error $(\tau \! =\! 0.01)$}

\addplot [color=white!70!black, mark=x, mark options={solid, white!70!black}, line width=1pt,mark size=3pt]
  table[row sep=crcr]{%
1.633	10.3382397692153\\
1.01089892691324	6.13319145451689\\
0.541588507842708	3.37726424622123\\
0.275912111333053	1.5529133062863\\
0.138617569905748	0.737698640277863\\
0.0693921825559949	0.360921312612738\\
0.0347065383395735	0.179184161451872\\
};
\addlegendentry{estimator $\eta$ $(\tau \! =\! 0.01)$}

\addplot [color=black, dashed, line width=1pt,mark size=3pt]
  table[row sep=crcr]{%
1.633	1.633\\
1.01089892691324	1.01089892691324\\
0.541588507842708	0.541588507842708\\
0.275912111333053	0.275912111333053\\
0.138617569905748	0.138617569905748\\
0.0693921825559949	0.0693921825559949\\
0.0347065383395735	0.0347065383395735\\
};
\addlegendentry{$\mathcal{O} (h)$}

\addplot [color=black, dotted, line width=1pt,mark size=3pt]
  table[row sep=crcr]{%
	1.633	0.96000804 \\
	0.0347065383395735	0.000361363141054887\\
};
\addlegendentry{$\mathcal{O} (h^2)$}

\end{axis}

\begin{axis}[%
width=3.486in,
height=4.245in,
at={(5.941in,0.573in)},
scale only axis,
xmode=log,
xmin=0.0347065383395735,
xmax=1.633,
xminorticks=true,
xlabel style={font=\color{white!15!black}},
xlabel={$h$},
ymode=log,
ymin=0.01,
ymax=100,
yminorticks=true,
axis background/.style={fill=white},
title={\large{Estimator and $L^2(H^1)$ error for different $\tau$}},
legend style={at={(0.01,0.99)}, anchor=north west, legend cell align=left, align=left, draw=white!15!black},
legend columns=2,
]
\addplot [color=white!30!black, mark=square, mark options={solid, white!30!black}, line width=1pt,mark size=3pt]
  table[row sep=crcr]{%
1.633	0.191226013144185\\
1.01089892691324	0.211690372639386\\
0.541588507842708	0.113333052292769\\
0.275912111333053	0.0682162352306555\\
0.138617569905748	0.0521655115138766\\
0.0693921825559949	0.0474003838319553\\
0.0347065383395735	0.0461359764250296\\
};
\addlegendentry{$L^2(H^1)$ error $(\tau \! =\! 1)$}

\addplot [color=white!70!black, mark=square, mark options={solid, white!70!black}, line width=1pt,mark size=3pt]
  table[row sep=crcr]{%
1.633	6.48535418543423\\
1.01089892691324	4.1818026120648\\
0.541588507842708	2.67796062049484\\
0.275912111333053	1.79499423360922\\
0.138617569905748	1.49709315771932\\
0.0693921825559949	1.39745637588206\\
0.0347065383395735	1.36062871364868\\
};
\addlegendentry{estimator $\eta$ $(\tau \! =\! 1)$}

\addplot [color=white!30!black, mark=o, mark options={solid, white!30!black}, line width=1pt,mark size=3pt]
  table[row sep=crcr]{%
1.633	0.471687317473611\\
1.01089892691324	0.525595223431429\\
0.541588507842708	0.281302086184771\\
0.275912111333053	0.14223897059865\\
0.138617569905748	0.0716641268684162\\
0.0693921825559949	0.0365070761618542\\
0.0347065383395735	0.0194639511501807\\
};
\addlegendentry{$L^2(H^1)$ error $(\tau \! =\! 0.1)$}

\addplot [color=white!70!black, mark=o, mark options={solid, white!70!black}, line width=1pt,mark size=3pt]
  table[row sep=crcr]{%
1.633	9.79963306123265\\
1.01089892691324	5.86107584098899\\
0.541588507842708	3.24914817919574\\
0.275912111333053	1.50187733130747\\
0.138617569905748	0.726042129727878\\
0.0693921825559949	0.378186799373583\\
0.0347065383395735	0.227026867762192\\
};
\addlegendentry{estimator $\eta$ $(\tau \! =\! 0.1)$}

\addplot [color=white!30!black, mark=x, mark options={solid, white!30!black}, line width=1pt,mark size=3pt]
  table[row sep=crcr]{%
1.633	0.504792875301036\\
1.01089892691324	0.559512439960544\\
0.541588507842708	0.299527370500333\\
0.275912111333053	0.151232644303781\\
0.138617569905748	0.0758459156130347\\
0.0693921825559949	0.0379789625875172\\
0.0347065383395735	0.0190119638958577\\
};
\addlegendentry{$L^2(H^1)$ error $(\tau \! =\! 0.01)$}

\addplot [color=white!70!black, mark=x, mark options={solid, white!70!black}, line width=1pt,mark size=3pt]
  table[row sep=crcr]{%
1.633	10.3382397692153\\
1.01089892691324	6.13319145451689\\
0.541588507842708	3.37726424622123\\
0.275912111333053	1.5529133062863\\
0.138617569905748	0.737698640277863\\
0.0693921825559949	0.360921312612738\\
0.0347065383395735	0.179184161451872\\
};
\addlegendentry{estimator $\eta$ $(\tau \! =\! 0.01)$}

\addplot [color=black, dashed, line width=1pt,mark size=3pt]
  table[row sep=crcr]{%
1.633	1.633\\
1.01089892691324	1.01089892691324\\
0.541588507842708	0.541588507842708\\
0.275912111333053	0.275912111333053\\
0.138617569905748	0.138617569905748\\
0.0693921825559949	0.0693921825559949\\
0.0347065383395735	0.0347065383395735\\
};
\addlegendentry{$\mathcal{O} (h)$}

\end{axis}
\end{tikzpicture}
	\caption{Errors and error estimates for various time steps and meshes illustrating efficiency and reliability.}
	\label{fig:equivalence}
\end{figure}
\subsection{A moving peak experiment: refinement and coarsening}
\label{sec:Moving_Peak}
The surface version of the  moving peak experiment from \cite[Section~5.3.1]{AdaptiveAlgHeatEq} is used as a benchmark example. We construct an exponential peak moving along the equator (in the $x_1$--$x_2$-plane) which briefly vanishes at the time $t_0$, for $x = (x_1,x_2,x_3)$:
\begin{equation*}
	u(x,t) = \big( 1-\exp(-b(t-t_0)^2) \big) \, \exp \Big( -a \Big( \Big(x_1-\cos{\tfrac{t}{R} \pi}\Big)^2 + \Big(x_2-\sin{\tfrac{t}{R} \pi}\Big)^2 + x_3^2 \Big) \Big).
\end{equation*}
We then compute the corresponding right-hand side $f$ such that the moving peak $u$ solves the surface heat equation.

The parameters $a = 25$, $b = 200$, and $R = 2$ respectively determine the sharpness of the peak, the speed at which the peak vanishes at $t_0 = 1/2$, and the number of revolutions around the equator. 
Here the peak will move from $(1,0,0)$ to $(0,1,0)$ while vanishing briefly at the midpoint. 

Figure~\ref{fig:MeshesPeak} reports on the adaptively obtained meshes at different time steps for the moving peak experiment with $\TOL = 2$. For better illustration we forced the temporal indicator to be smaller $\TOL_{\tau} = 0.1 \TOL$ and the coarsening to be larger $\TOL_{coarse} = 10 \TOL$, but the significant decrease of nodes around $t^0$ always occurs.
\begin{figure}[htbp]
    \centering
    \includegraphics[width=1\linewidth]{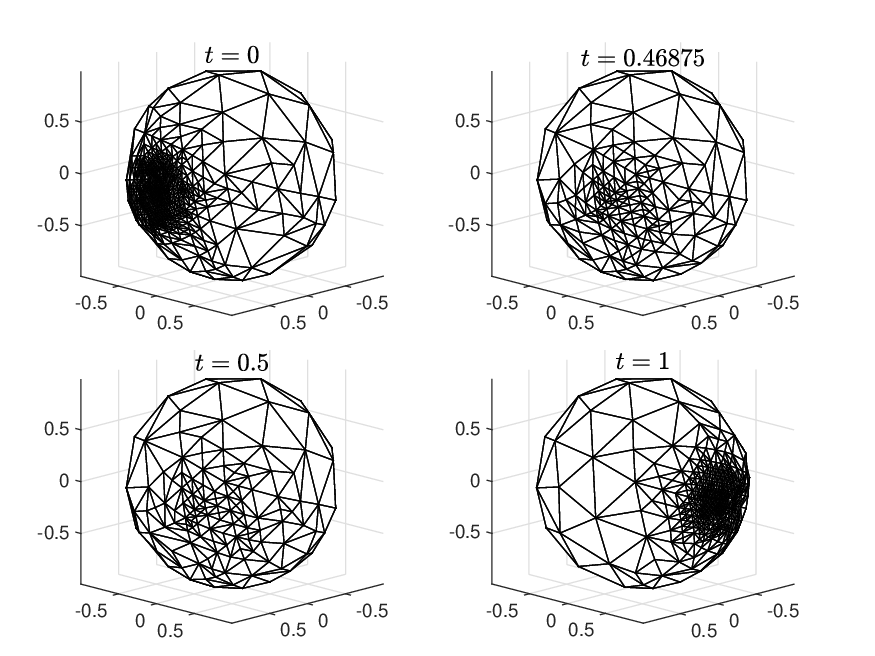} 
    \caption{Adaptively obtained meshes at different time steps for the moving peak experiment ($\TOL = 2$).}
	\label{fig:MeshesPeak}
\end{figure}

\section*{Acknowledgements}
The work of Bal\'azs Kov\'acs is funded by the Heisenberg Programme of the Deutsche Forschungsgemeinschaft (DFG, German Research Foundation) -- Project-ID 446431602,
and by the DFG Research Unit FOR 3013 \textit{Vector- and tensor-valued surface PDEs} (BA2268/6–1).

\bibliographystyle{siamplain}
\bibliography{literature_siam}

\end{document}